\documentclass[11pt,twoside, leqno]{article}
\usepackage{amsthm}
\usepackage{amssymb}
\usepackage{amsmath}
\usepackage{mathrsfs}
\usepackage{txfonts}
\usepackage{graphics}
\usepackage{epsfig}
\usepackage{multirow}
\usepackage{verbatim}

\pagestyle{myheadings}\markboth{\footnotesize\rm\sc Renjin Jiang, Jie Xiao and Dachun Yang}
{\footnotesize\rm\sc Towards Spaces of Harmonic Functions}

\topmargin -0.6cm
\textwidth 148truemm
\textheight 229.026truemm
\oddsidemargin 0.46cm
\evensidemargin 0.46cm

\parindent=13pt

\allowdisplaybreaks

\newtheorem{thm}{Theorem}
\newtheorem{lem}[thm]{Lemma}
\newtheorem{cor}[thm]{Corollary}

\newtheorem{rem}[thm]{Remark}



 

\def\rr{{\mathbb R}}
\def\rn{{{\rr}^n}}

\def\lf{\left}
\def\r{\right}

\begin{document}

\arraycolsep=1pt

\author{Renjin Jiang, Jie Xiao and Dachun Yang} \arraycolsep=1pt
\title{\Large\bf Towards Spaces of Harmonic Functions with
Traces \\ in Square Campanato Spaces and Their Scaling Invariants
\footnotetext{\hspace{-0.35cm} 2010
{\it Mathematics Subject Classification}. Primary: 42B35; Secondary: 31B05, 35Q30, 42B37, 46E35.
\endgraf{\it Key words and phrases}. harmonic function, trace, square Campanato space,
scaling invariant, heat equation, Navier-Stokes system.
\endgraf J. Xiao was supported by NSERC of Canada (FOAPAL \# 202979463102000)
and URP of Memorial University (FOAPAL \# 208227463102000);
R. Jiang and D. Yang were partially supported by the National
Natural Science Foundation of China (Nos. 11301029, 11171027 \& 11361020),
the Specialized Research Fund for the Doctoral Program of Higher Education
of China (No. 20120003110003) and the Fundamental Research Funds for Central
Universities of China (Nos. 2012LYB26 \& 2013YB60).}}
\date{ }
\maketitle

\vspace{-0.8cm}

\begin{center}
\begin{minipage}{12cm}\small
{\noindent{\bf Abstract.}\, For $n\ge 1$ and $\alpha\in (-1,1)$, let $H^{\alpha,2}$ be
the space of harmonic functions $u$
on the upper half space $\mathbb{R}^{n+1}_+$ satisfying
$$\displaystyle\sup_{(x_0,r)\in \mathbb R^{n+1}_+}r^{-(2\alpha+n)}\int_{B(x_0,r)}\int_0^r|\nabla_{x,t} u(x,t)|^2t\,dt\,dx<\infty,$$
and $\mathcal{L}_{2,n+2\alpha}$ be the Campanato space on $\mathbb R^n$.
We show that $H^{\alpha,2}$ coincide with $e^{-t\sqrt{-\Delta}}\mathcal{L}_{2,n+2\alpha}$
for all $\alpha\in (-1,1)$, where the case $\alpha\in [0,1)$ was originally discovered
by Fabes, Johnson and Neri [Indiana Univ. Math. J. 25 (1976), 159-170]
and yet the case $\alpha\in (-1,0)$ was left open.
Moreover, for the scaling invariant version of $H^{\alpha,2}$, $\mathcal{H}^{\alpha,2}$, which
comprises all harmonic functions $u$ on $\mathbb R^{n+1}_+$ satisfying
$$\sup_{(x_0,r)\in\mathbb R^{n+1}_+}r^{-(2\alpha+n)}\int_{B(x_0,r)}
\int_0^r|\nabla_{x,t} u(x,t)|^2\,t^{1+2\alpha} \,dt\,dx<\infty,$$
we show that $\mathcal{H}^{\alpha,2}=e^{-t\sqrt{-\Delta}}(-\Delta)^\frac{\alpha}{2}\mathcal{L}_{2,n+2\alpha}$,
where $(-\Delta)^{\frac{\alpha}{2}}\mathcal{L}_{2,n+2\alpha}$ is the collection of all functions $f$ such that $(-\Delta)^{-\frac{\alpha}{2}}f$ are in $\mathcal{L}_{2,n+2\alpha}$.
Analogues for solutions to the heat equation are also established. As an application,
we show that the spaces $\big((-\Delta)^{\frac{\alpha}{2}}\mathcal{L}_{2,n+2\alpha}\big)^{-1}$ unify
$Q_{\alpha}^{-1}$, ${\mathrm{BMO}}^{-1}$ and $\dot{B}^{-1,\infty}_\infty$ naturally. }
\end{minipage}
\end{center}

\vspace{0.5cm}


\section{Introduction}\label{s1}
\setcounter{equation}{0}
\hskip\parindent The study of harmonic (caloric) functions on the upper half space $\rr^{n+1}_+$
and spaces of their traces on $\rr^n$ has a long history and plays important roles in harmonic analysis
and PDEs; see, for instance, \cite{FS,FJN2,FJN,FN,DX1,NY,DYZ,HMM} and references therein. In their seminal work \cite{FS},
Fefferman and Stein discovered that a function $f$ of bounded mean oscillation (BMO) is the trace of the harmonic function
$u(x,t)$ on $\rr^{n+1}_+$, $x\in \rr^n$, $t\in (0,\infty)$ and $u(x,0)=f(x)$, whenever $u$ satisfies
\begin{equation}\label{1.1}
\sup_{(x_0,r)\in \mathbb R^{n+1}_+}
r^{-n}\int_{B(x_0,r)}\int_0^r|\nabla_{x,t} u(x,t)|^2t\,dt\,dx<\infty.
\end{equation}
Above, $B(x_0,r)$ denotes the open ball $\{x\in\mathbb R^n:\ |x-x_0|<r\}$ and
$\nabla_{x,t}:=(\nabla_x,\partial_t):=(\partial_1,...,\partial_{n},\partial_t)$
denotes the total gradient. Fabes, Johnson and Neri \cite{FJN} further showed that the condition
\eqref{1.1} indeed characterizes all the harmonic functions whose traces are in BMO. Fabes and Neri \cite{FN} then
generalized this characterization to caloric functions, i.\,e., solutions to the heat equation.
Recently, Duong et al. \cite{DYZ} and Hofmann et al. \cite{HMM} have extended the study of this topic to
Schr\"odinger operators and elliptic operators, respectively.

Notice that Fabes et al. \cite{FJN} actually dealt more than just the space BMO. Precisely, for $n\ge 1$ and $\alpha\in (-1,1)$, let $H^{\alpha,2}\equiv H^{\alpha,2}(\rr^{n+1}_+)$ be the space of harmonic functions $u$
on the upper half space $\mathbb{R}^{n+1}_+:=\mathbb R^n\times (0,\infty)$ satisfying
$$\|u\|_{H^{\alpha,2}}:=\bigg(\displaystyle\sup_{(x_0,r)\in \mathbb R^{n+1}_+}
r^{-(2\alpha+n)}\int_{B(x_0,r)}\int_0^r|\nabla_{x,t} u(x,t)|^2t\,dt\,dx\bigg)^\frac12<\infty.$$
In what follows, let $\nabla:=\nabla_x$, $\Delta:= \sum_{j=1}^n\partial_j^2$
and denote the Poisson semigroup $P_t$ by $e^{-t\sqrt{-\Delta}}$. When $\alpha=0$,
$H^{0,2}$ is the space ${\mathrm{HMO}}$ introduced in \cite{FJN}. Particularly,
in \cite[Theorem 2.1]{FJN}, it was proved that, for all $\alpha<1$, $H^{\alpha,2}$ contains
the space $e^{-t\sqrt{-\Delta}}\mathcal{L}_{2,n+2\alpha}$ consisting of all the Poisson extensions
$$
e^{-t\sqrt{-\Delta}}f(x)=\pi^{-\frac{n+1}{2}}\Gamma\bigg(\frac{n+1}{2}\bigg)\int_{\mathbb R^n}(|x-y|^2+t^2)^{-\frac{n+1}2}t f(y)\,dy,
\ \ \ x\in \rr^n,
$$
of all $f$ in the square Campanato space $\mathcal{L}_{2,n+2\alpha}$ on $\mathbb R^n$ (cf. \cite{Cam1, Cam2, Sta}).
Recall that the Campanato space $\mathcal{L}_{2,n+2\alpha}\equiv\mathcal{L}_{2,n+2\alpha}(\rr^n)$ is the collection of all $L_{\mathrm{loc}}^2$ functions
satisfying
$$\|f\|_{{\mathcal L}_{2,n+2\alpha}}:=
\left(\displaystyle\sup_{(x_0,r)\in \mathbb R^{n+1}_+}r^{-(n+2\alpha)}\int_{B(x_0,r)} |f(x)-f_{B(x_0,r)}|^2\,dx\right)^\frac12<\infty,$$
where $f_{B(x_0,r)}$ denotes the integral average of $f$ on $B(x_0,r)$, i.\,e.,
$$f_{B(x_0,r)}:=\frac 1{|B(x_0,r)|}\int_{B(x_0,r)}f(y)\,dy.$$

Moreover, in \cite[Theorems 1.0 and 2.2]{FJN}, it was proved that,
if $\alpha\in [0,1)$, then $H^{\alpha,2}$ is contained in
$e^{-t\sqrt{-\Delta}}\mathcal{L}_{2,n+2\alpha}$ and hence $H^{\alpha,2}$ coincides with $e^{-t\sqrt{-\Delta}}\mathcal{L}_{2,n+2\alpha}$.
However, one point left unsolved is the characterization of $H^{\alpha,2}$ for $\alpha\in (-1,0)$.

The main aim of this article is to extend aforementioned characterizations of harmonic functions to the cases $\alpha\in (-1,1)$,
and to establish analogues for caloric functions. The main results of this article are Theorems \ref{t21}, \ref{t31}, \ref{t41} and \ref{t42}
below. Our first theorem below extends the known case $\alpha\in [0,1)$
established in \cite{FJN, FJN2} (see also \cite{FN, Tor} for more information) to $\alpha\in (-1,1)$.

\begin{thm}\label{t21}
If $\alpha\in (-1,1)$, then $H^{\alpha,2}=e^{-t\sqrt{-\Delta}}\mathcal{L}_{2,n+2\alpha}$ with equivalent norms.
\end{thm}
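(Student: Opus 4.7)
\noindent\textbf{Proof plan for Theorem~\ref{t21}.}
The inclusion $e^{-t\sqrt{-\Delta}}\mathcal{L}_{2,n+2\alpha}\subset H^{\alpha,2}$, together with the estimate $\|e^{-t\sqrt{-\Delta}}f\|_{H^{\alpha,2}}\le C\|f\|_{\mathcal{L}_{2,n+2\alpha}}$, is already \cite[Theorem 2.1]{FJN} for every $\alpha\in(-1,1)$; the converse inclusion for $\alpha\in[0,1)$ is \cite[Theorem 2.2]{FJN}. Thus the only new content is the reverse containment in the range $\alpha\in(-1,0)$: given $u\in H^{\alpha,2}$, produce $f\in\mathcal{L}_{2,n+2\alpha}$ with $u=e^{-t\sqrt{-\Delta}}f$ on $\mathbb R^{n+1}_+$ and $\|f\|_{\mathcal{L}_{2,n+2\alpha}}\le C\|u\|_{H^{\alpha,2}}$.

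The first step is to introduce approximate traces. For each $\epsilon>0$ set $u_\epsilon(x,t):=u(x,t+\epsilon)$ and $f_\epsilon(x):=u(x,\epsilon)$. Interior regularity for harmonic functions on Whitney balls, combined with the Carleson-type hypothesis on $|\nabla_{x,t}u|^2\,t\,dt\,dx$, would yield a sub-polynomial $L^2$-growth bound for $f_\epsilon$ at infinity, in particular its Poisson integrability. Uniqueness of Poisson extensions among slowly growing harmonic functions, applied to $u_\epsilon$ (which is smooth on $\overline{\mathbb R^{n+1}_+}$ and harmonic inside), then yields
\[
u_\epsilon(x,t)=e^{-t\sqrt{-\Delta}}f_\epsilon(x),\qquad (x,t)\in\mathbb R^{n+1}_+;
\]
the absence of a rogue additive $c\,t$ term is forced by the $H^{\alpha,2}$ finiteness of $u_\epsilon$ inherited from $u$.

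The heart of the argument is the uniform Campanato bound $\sup_{\epsilon>0}\|f_\epsilon\|_{\mathcal{L}_{2,n+2\alpha}}\le C\|u\|_{H^{\alpha,2}}$. Fixing $B:=B(x_0,r)$ with $\epsilon<r$, I would use the splitting
\[
f_\epsilon-(f_\epsilon)_B
=\bigl[u(\cdot,\epsilon)-u(\cdot,r)\bigr]
-\bigl[u(\cdot,\epsilon)-u(\cdot,r)\bigr]_B
+\bigl[u(\cdot,r)-(u(\cdot,r))_B\bigr].
\]
The horizontal-oscillation piece $u(\cdot,r)-(u(\cdot,r))_B$ is handled by the Poincar\'e inequality on $B$ together with interior gradient estimates for the harmonic $u$ on the slab $B\times(r/2,2r)$, reducing to the Carleson integral $\iint_{2B\times(0,2r)}|\nabla_{x,t}u|^2\,t\,dt\,dx\le\|u\|_{H^{\alpha,2}}^2(2r)^{n+2\alpha}$. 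The vertical difference $u(\cdot,\epsilon)-u(\cdot,r)=-\int_\epsilon^r\partial_t u(\cdot,t)\,dt$ is the delicate piece, for a naive Cauchy--Schwarz against $dt/t$ produces a spurious $\log(r/\epsilon)$ factor. To circumvent this, the plan is a dyadic-in-$t$ decomposition $\int_\epsilon^r=\sum_j\int_{2^{-j-1}r}^{2^{-j}r}$, estimating each annular piece on its own Whitney box via a Caccioppoli/reverse-H\"older coupling and then summing geometrically; the Morrey-type weight $r^{-(n+2\alpha)}$ with $\alpha<0$ precisely accommodates the geometric sum through the gain $2^{-j(n+2\alpha)}$ at each level. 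An alternative route is via the tent-space duality $(T^{2,\infty}_\alpha)^\ast=T^{2,1}_{-\alpha}$, realizing $f_\epsilon$ by a Calder\'on reproducing formula and then testing against atoms in the predual of $\mathcal{L}_{2,n+2\alpha}$.

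Once the uniform bound is in place, weak-$\ast$ compactness in $\mathcal{L}_{2,n+2\alpha}$ (whose predual, in the present Morrey range, is a separable Hardy-/block-type space) extracts a subsequence $f_{\epsilon_k}\rightharpoonup f\in\mathcal{L}_{2,n+2\alpha}$ with the claimed norm bound. Passing to the limit in $u_{\epsilon_k}(x,t)=e^{-t\sqrt{-\Delta}}f_{\epsilon_k}(x)$ for each fixed $(x,t)\in\mathbb R^{n+1}_+$ — the Poisson kernel $P_t(x-\cdot)$ is a legitimate predual test element at positive height — identifies $u=e^{-t\sqrt{-\Delta}}f$. The chief obstacle is the logarithmic mismatch between the Carleson weight $t\,dt$ and the pointwise identity for $u(\cdot,\epsilon)-u(\cdot,r)$, which reflects the genuine difference between the BMO/H\"older regime $\alpha\in[0,1)$ treated by Fabes--Johnson--Neri and the Morrey regime $\alpha\in(-1,0)$ addressed here, and explains why the latter case was left open in \cite{FJN}.
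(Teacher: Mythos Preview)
Your overall architecture (approximate traces $f_\epsilon=u(\cdot,\epsilon)$, a uniform bound $\sup_\epsilon\|f_\epsilon\|_{\mathcal L_{2,n+2\alpha}}\lesssim\|u\|_{H^{\alpha,2}}$, then a weak limit) is exactly the paper's. The gap is in your primary route to the uniform bound. In the dyadic estimate for the vertical piece $u(\cdot,\epsilon)-u(\cdot,r)=\sum_j\int_{2^{-j-1}r}^{2^{-j}r}\partial_t u(\cdot,t)\,dt$, you still have to integrate in $x$ over all of $B(x_0,r)$, not over a single Whitney cube at height $2^{-j}r$. Covering $B(x_0,r)$ by $\sim 2^{jn}$ balls of radius $2^{-j}r$ and invoking the $H^{\alpha,2}$ hypothesis at that scale gives
\[
\int_{B(x_0,r)}\int_{2^{-j-1}r}^{2^{-j}r}|\partial_t u|^2\,t\,dt\,dx\;\lesssim\;2^{jn}\,(2^{-j}r)^{n+2\alpha}\|u\|_{H^{\alpha,2}}^2\;=\;2^{-2j\alpha}r^{n+2\alpha}\|u\|_{H^{\alpha,2}}^2,
\]
so the per-level contribution to the $L^2(B)$ norm is $2^{-j\alpha}$, not $2^{-j(n+2\alpha)/2}$. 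For $\alpha<0$ this \emph{grows}, and the Minkowski sum is $\sim(r/\epsilon)^{|\alpha|}$, worse than the logarithm you set out to avoid. Subtracting the $B$-mean of the vertical piece does not rescue this; nor does Caccioppoli, which goes the wrong way.

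The paper sidesteps the whole vertical-difference computation. Its Lemma~\ref{l21}(i) records (from \cite{Xiao1} for $\alpha\in(-1,0)$ and \cite{FJN} for $\alpha\in[0,1)$) the two-sided equivalence
\[
\|g\|_{\mathcal L_{2,n+2\alpha}}\ \sim\ \Big(\sup_{(x_0,r)}r^{-(n+2\alpha)}\!\!\int_{B(x_0,r)}\!\int_0^r|\nabla_{x,t}e^{-t\sqrt{-\Delta}}g|^2\,t\,dt\,dx\Big)^{1/2}.
\]
Since $u_\epsilon=e^{-t\sqrt{-\Delta}}f_\epsilon$ (your first step, their Lemma~\ref{l22}(iii)), this reads $\|f_\epsilon\|_{\mathcal L_{2,n+2\alpha}}\sim\|u_\epsilon\|_{H^{\alpha,2}}$, and the elementary translation estimate $\|u_\epsilon\|_{H^{\alpha,2}}\lesssim\|u\|_{H^{\alpha,2}}$ (a two-case computation using the pointwise bound $|\nabla_{x,t}u|\lesssim t^{\alpha-1}\|u\|_{H^{\alpha,2}}$ when $\epsilon>r$) finishes the uniform bound in one line. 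Your tent-space/duality alternative would, if carried out, amount to reproving this Carleson characterization; but as stated it is only a pointer, and the concrete mechanism you propose does not close.
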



It then turns out, surprisingly but naturally, such an equation for both spaces can be transplanted to
their scaling invariant spaces.

The scaling invariant square Campanato-Sobolev space $(-\Delta)^{\frac{\alpha}{2}}\mathcal{L}_{2,n+2\alpha}\equiv (-\Delta)^{\frac{\alpha}{2}}\mathcal{L}_{2,n+2\alpha}(\rn)$,
the scaling invariant of $\mathcal{L}_{2,n+2\alpha}$, is the collection of all measurable functions $f$ such that $(-\Delta)^{-\frac{\alpha}{2}}f$ belong to the space $\mathcal{L}_{2,n+2\alpha}$,
 and we equip $f$ with the norm
$$
\lf\|f\r\|_{(-\Delta)^{\frac{\alpha}{2}}\mathcal{L}_{2,n+2\alpha}}:= \lf\|(-\Delta)^{-\frac{\alpha}{2}}f\r\|_{{{\mathcal L}_{2,n+2\alpha}}}.
$$
Here $(-\Delta)^{-\frac{\alpha}{2}} f$, determined through the Fourier transform
$$\widehat{(-\Delta)^{-\frac{\alpha}{2}}f}(x):=(2\pi|x|)^{-\alpha}\hat{f}(x),\quad \forall\,x\in\rn,$$
expresses the $(-\alpha/2)$-th power of the spatial Laplacian
$-\Delta f(x)$. 
Obviously, $(-\Delta)^{\frac{\alpha}{2}}\mathcal{L}_{2,n+2\alpha}$ is the ${\mathrm{BMO}}$ space when $\alpha=0$, while
$(-\Delta)^{\frac{\alpha}{2}}\mathcal{L}_{2,n+2\alpha}$ coincide with the space $Q_{-\alpha}$ introduced by \cite{EJPX}
(cf. \cite{Xiao1, Xiao2,WX}) for $\alpha\in (-1,0]$. See Section 3 below for more properties of the spaces
$(-\Delta)^{\frac{\alpha}{2}}\mathcal{L}_{2,n+2\alpha}$.

The scaling invariant version of $H^{\alpha,2}$, $\mathcal{H}^{\alpha,2}\equiv\mathcal{H}^{\alpha,2}(\rr^{n+1}_+)$, is defined to be the collection
of all harmonic functions $u$ on $\mathbb R^{n+1}_+$ satisfying
$$
\|u\|_{\mathcal{H}^{\alpha,2}}:=
\left(\sup_{(x_0,r)\in\mathbb R^{n+1}_+}r^{-(2\alpha+n)}\int_{B(x_0,r)}
\int_0^r|\nabla_{x,t} u(x,t)|^2\,t^{1+2\alpha} \,dt\,dx\right)^\frac12<\infty.
$$
The space $\mathcal{H}^{\alpha,2}$ is scaling invariant and, for $\alpha\in (0,1)$, it is easy to see that
$$\mathcal{H}^{-\alpha,2}\subseteq \mathcal{H}^{0,2}={\mathrm{HMO}}\subseteq \mathcal{H}^{\alpha,2};$$
see Lemma \ref{l32} below for details.

Theorem \ref{t21} holds true also in the scaling invariant version as follows.
\begin{thm}\label{t31}
For $\alpha\in (-1,1)$, one has

{\rm(i)} $\mathcal{H}^{\alpha,2}=e^{-t\sqrt{-\Delta}}(-\Delta)^{\frac{\alpha}{2}}\mathcal{L}_{2,n+2\alpha}$ with equivalent norms.

{\rm(ii)} If $\beta\in (0,1)$, then $\mathcal{H}^{\beta,2}=\mathop\mathrm{HB}\equiv \mathop\mathrm{HB}(\rr^{n+1}_+)$ -- the Bloch space of all
harmonic functions $u$ in $\mathbb R^{n+1}_+$ satisfying
$$
\|u\|_{\mathop\mathrm{HB}}:=\sup_{(x,t)\in\mathbb R^{n+1}_+}t|\nabla_{x,t}u(x,t)|<\infty,
$$
and hence $\mathop\mathrm{HB}=e^{-t\sqrt{-\Delta}}(-\Delta)^{\frac{\beta}{2}}\mathcal{L}_{2,n+2\beta}$ with equivalent norms.
\end{thm}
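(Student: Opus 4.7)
The plan is to reduce part (i) to Theorem \ref{t21} via the isomorphism $(-\Delta)^{-\alpha/2}$, and to deduce part (ii) from (i) together with elementary bounds using subharmonicity. The core of part (i) is the equivalence, for every harmonic $u$ on $\rr^{n+1}_+$,
\begin{equation*}
\|u\|_{\mathcal{H}^{\alpha,2}} \sim \|(-\Delta)^{-\alpha/2}u\|_{H^{\alpha,2}}.
\end{equation*}
Granting this, and using that $(-\Delta)^{-\alpha/2}$ commutes with the Poisson semigroup, for $u = e^{-t\sqrt{-\Delta}}g$ one has $(-\Delta)^{-\alpha/2}u = e^{-t\sqrt{-\Delta}}(-\Delta)^{-\alpha/2}g$, so by Theorem \ref{t21},
\begin{equation*}
u \in \mathcal{H}^{\alpha,2} \iff (-\Delta)^{-\alpha/2}u \in H^{\alpha,2} \iff (-\Delta)^{-\alpha/2}g \in \mathcal{L}_{2,n+2\alpha} \iff g \in (-\Delta)^{\alpha/2}\mathcal{L}_{2,n+2\alpha},
\end{equation*}
which is exactly (i).

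To prove this equivalence I would trade the weight $t^{1+2\alpha}$ against the fractional power $(-\Delta)^{-\alpha/2}$. A natural tool is the subordination formula $(-\Delta)^{-\alpha/2}F = \Gamma(\alpha)^{-1}\int_0^\infty e^{-s\sqrt{-\Delta}}F\, s^{\alpha-1}\,ds$ for $\alpha \in (0,1)$, combined with the identity $\sqrt{-\Delta}u = -\partial_t u$ valid for harmonic $u$, which converts fractional powers of $-\Delta$ into fractional $t$-derivatives. At the spectral level, both $\int_0^\infty |\xi|^2 e^{-4\pi t|\xi|}t^{1+2\alpha}\,dt$ and $\int_0^\infty |\xi|^{2-2\alpha}e^{-4\pi t|\xi|}t\,dt$ equal a constant multiple of $|\xi|^{-2\alpha}$, so Plancherel gives the equivalence on the whole half-space; the case $\alpha \in (-1,0)$ is symmetric, obtained by exchanging the roles of $(-\Delta)^{-\alpha/2}$ and $(-\Delta)^{\alpha/2}$.

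For part (ii), the inclusion $\mathrm{HB} \subseteq \mathcal{H}^{\beta,2}$ is direct: if $t|\nabla_{x,t}u| \le \|u\|_{\mathrm{HB}}$, then
\begin{equation*}
r^{-(n+2\beta)}\int_{B(x_0,r)}\int_0^r |\nabla_{x,t}u|^2 t^{1+2\beta}\,dt\,dx \le \|u\|_{\mathrm{HB}}^2\, r^{-(n+2\beta)}|B(x_0,r)|\cdot\tfrac{r^{2\beta}}{2\beta} \lesssim \|u\|_{\mathrm{HB}}^2,
\end{equation*}
where $\beta > 0$ secures convergence of $\int_0^r t^{2\beta-1}\,dt$. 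For $\mathcal{H}^{\beta,2} \subseteq \mathrm{HB}$, since each component of $\nabla_{x,t}u$ is harmonic, $|\nabla_{x,t}u|^2$ is subharmonic, and the mean value inequality on the $(n+1)$-ball $B((x_0,t_0), t_0/2) \subset \rr^{n+1}_+$ (on which $t \sim t_0$) gives
\begin{equation*}
|\nabla_{x,t}u(x_0,t_0)|^2 \lesssim t_0^{-(n+2+2\beta)}\int_{B(x_0,2t_0)}\int_0^{2t_0}|\nabla_{x,t}u|^2 t^{1+2\beta}\,dt\,dx \lesssim t_0^{-2}\|u\|_{\mathcal{H}^{\beta,2}}^2
\end{equation*}
after applying the $\mathcal{H}^{\beta,2}$ bound with $r = 2t_0$. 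The identification $\mathrm{HB} = e^{-t\sqrt{-\Delta}}(-\Delta)^{\beta/2}\mathcal{L}_{2,n+2\beta}$ then follows from $\mathrm{HB} = \mathcal{H}^{\beta,2}$ and part (i).

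The main obstacle is localizing the key equivalence in (i) to balls: the Plancherel computation above is intrinsically global, so transferring the pointwise-in-$\xi$ comparison of weighted time integrals to ball-restricted Carleson integrals requires additional work, likely via a molecular/atomic decomposition of $\mathcal{L}_{2,n+2\alpha}$ or a Calderón reproducing formula with tight kernel estimates.
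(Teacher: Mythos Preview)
Your overall architecture for (i) matches the paper's: the heart of the matter is indeed the equivalence $\|u\|_{\mathcal{H}^{\alpha,2}}\sim\|(-\Delta)^{-\alpha/2}u\|_{H^{\alpha,2}}$ (this is the paper's Lemma~\ref{l33}), after which Theorem~\ref{t21} finishes the job. Your treatment of (ii) is essentially identical to the paper's.

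Where your proposal diverges from the paper is precisely at the obstacle you yourself flag. The paper does \emph{not} localize a Plancherel identity, nor does it use an atomic decomposition or a Calder\'on reproducing formula. Instead it bootstraps through Theorem~\ref{t21}. For the direction $\|u\|_{\mathcal{H}^{\alpha,2}}\lesssim\|(-\Delta)^{-\alpha/2}u\|_{H^{\alpha,2}}$, set $v=(-\Delta)^{-\alpha/2}u$; since $v\in H^{\alpha,2}$, Theorem~\ref{t21} produces $h\in\mathcal{L}_{2,n+2\alpha}$ with $v=e^{-t\sqrt{-\Delta}}h$, and then $u=(-\Delta)^{\alpha/2}e^{-t\sqrt{-\Delta}}h$. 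One now writes
\[
|\nabla_{x,t}u|^2\,t^{1+2\alpha}=\bigl|\nabla_{x,t}(t\sqrt{-\Delta})^{\alpha}e^{-t\sqrt{-\Delta}}(h-h_{2B})\bigr|^2\,t,
\]
so the extra weight $t^{2\alpha}$ is absorbed into the operator $(t\sqrt{-\Delta})^{\alpha}$. The kernel $\nabla_{x,t}K_\alpha(t,x,y)$ of $\nabla_{x,t}(t\sqrt{-\Delta})^{\alpha}e^{-t\sqrt{-\Delta}}$ obeys the pointwise bound
\[
|\nabla_{x,t}K_\alpha(t,x,y)|\lesssim t^{\alpha}(t+|x-y|)^{-(n+1+\alpha)},
\]
has vanishing integral in $y$, and this is enough to run the standard near/far decomposition of $h-h_{2B}$ over $2B$ and the annuli $2^{j+1}B\setminus 2^jB$, with $L^2$-boundedness handling the local piece. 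No global-to-local transfer of a spectral identity is needed.

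For the reverse direction $\|(-\Delta)^{-\alpha/2}u\|_{H^{\alpha,2}}\lesssim\|u\|_{\mathcal{H}^{\alpha,2}}$ the paper splits into cases, and this is where your sketch is thinnest. When $\alpha\in(-1,0]$ one observes $\|u\|_{H^{0,2}}\lesssim\|u\|_{\mathcal{H}^{\alpha,2}}$, so by Theorem~\ref{t21} (the $\alpha=0$ case) $u=e^{-t\sqrt{-\Delta}}f$ for some $f\in\mathrm{BMO}$; the $\mathcal{H}^{\alpha,2}$ norm then coincides with the $Q_{-\alpha}$ norm of $f$ via \eqref{32}, which in turn equals $\|f\|_{(-\Delta)^{\alpha/2}\mathcal{L}_{2,n+2\alpha}}$. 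When $\alpha\in(0,1)$ one first extracts $|\nabla_{x,t}u(x,t)|\lesssim t^{-1}\|u\|_{\mathcal{H}^{\alpha,2}}$ by subharmonicity (exactly your argument from part~(ii)), and then inserts this into the subordination formula
\[
(-\Delta)^{-\alpha/2}u(x,t)=c_\alpha\int_0^\infty u(x,t+s)\,s^{\alpha-1}\,ds
\]
to estimate $\nabla_{x,t}(-\Delta)^{-\alpha/2}u$ directly. Your remark that the case $\alpha<0$ is ``symmetric'' to $\alpha>0$ is not how the paper proceeds, and a direct symmetry argument is unlikely to work since for $\alpha<0$ the subordination integral in $s$ no longer makes sense as written.
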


Our approach is essentially different from the Green-formula-based-argument carried
out in \cite{FJN2, FN} and, consequently, can be applied to explore a similar
description of caloric (temperature) functions, i.\,e., solutions to the heat equation on  $\rr^{n+1}_+$:
$$
(\Delta-\partial_t)u(x,t)=0,\quad\forall\,(x,t)\in\mathbb R^{n+1}_+.
$$

In accordance with \cite{FN}, for $\alpha\in (-1,1)$ let $T^{\alpha,2}\equiv T^{\alpha,2}(\rr^{n+1}_+)$,
respectively $\mathcal{T}^{\alpha,2}\equiv\mathcal{T}^{\alpha,2}(\rr^{n+1}_+)$, be the classes of all caloric functions $u(x,t)$ on $\mathbb R^{n+1}_+$,
equipped with the norm
$$ \|u\|_{T^{\alpha,2}}:=\left(\displaystyle\sup_{(x_0,r)\in \mathbb R^{n+1}_+}r^{-(2\alpha+n)}\int_{B(x_0,r)}\int_0^{r^2}|\nabla u(x,t)|^2\,dt\,dx\right)^\frac12<\infty,$$
respectively
$$\|u\|_{\mathcal{T}^{\alpha,2}}:=\left(\displaystyle\sup_{(x_0,r)\in \mathbb R^{n+1}_+}r^{-(2\alpha+n)}\int_{B(x_0,r)}\int_0^{r^2}|\nabla u(x,t)|^2t^{\alpha}\,dt\,dx\right)^\frac12<\infty.$$

Notice that, when $\alpha=0$,   $T^{\alpha,2}$ coincides with
$\mathcal{T}^{\alpha,2}$, and they are indeed the space ${\mathrm{TMO}}$ introduced by Fabes and Neri \cite{FN}.
Moreover, it was proved in \cite{FN} that ${\mathrm{TMO}}=e^{t\Delta} {\mathrm{BMO}}$.

The following theorem extends Theorems \ref{t21} and \ref{t31} to caloric functions, which
generalizes the characterization of ${\mathrm{TMO}}$ in \cite{FN} to $T^{\alpha,2}$ and $\mathcal{T}^{\alpha,2}$.

\begin{thm}\label{t41} For $\alpha\in (-1,1)$, one has

{\rm(i)} $T^{\alpha,2}=e^{t\Delta}\mathcal{L}_{2,n+2\alpha}$ with equivalent norms.

{\rm(ii)}  $\mathcal{T}^{\alpha,2}=e^{t\Delta}(-\Delta)^\frac{\alpha}{2}\mathcal{L}_{2,n+2\alpha}$ with equivalent norms.

{{\rm(iii)} If $\beta\in (0,1)$, then $\mathcal{T}^{\beta,2}=\mathop\mathrm{CB}\equiv \mathop\mathrm{CB}(\rr^{n+1}_+)$ -- the Bloch space of all caloric
functions $u$ in $\mathbb R^{n+1}_+$ satisfying
$$
\|u\|_{\mathop\mathrm{CB}}:=\sup_{(x,t)\in\mathbb R^{n+1}_+}\sqrt{t}|\nabla u(x,t)|<\infty,
$$
and hence $\mathop\mathrm{CB}=e^{t\Delta}(-\Delta)^\frac{\beta}{2}\mathcal{L}_{2,n+2\beta}$} with equivalent norms.
\end{thm}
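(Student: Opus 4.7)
The plan is to mimic the strategy of Theorems \ref{t21} and \ref{t31}, replacing the Poisson semigroup $e^{-t\sqrt{-\Delta}}$ by the heat semigroup $e^{t\Delta}$ and the time range $(0,r)$ by $(0,r^2)$ throughout, so that the parabolic scaling is respected. The heat kernel $W_t(x)=(4\pi t)^{-n/2}\exp(-|x|^2/(4t))$ and its derivative $\nabla W_t$ enjoy Gaussian pointwise bounds that are at least as strong as the corresponding bounds for the Poisson kernel, so the off-diagonal estimates used in the harmonic case transfer directly and in fact more easily.

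For (i), I would prove the two inclusions separately. For $e^{t\Delta}\mathcal{L}_{2,n+2\alpha}\subseteq T^{\alpha,2}$, fix a ball $B=B(x_0,r)$ and split $f=f_{2B}+(f-f_{2B})\chi_{2B}+(f-f_{2B})\chi_{(2B)^c}$; the constant part is annihilated by $\nabla$, the local piece is controlled by the Plancherel-type identity $\int_0^\infty\|\nabla e^{t\Delta}g\|_2^2\,dt\lesssim\|g\|_2^2$ together with $\|(f-f_{2B})\chi_{2B}\|_2^2\lesssim r^{n+2\alpha}$, and the global piece is treated by the Gaussian bound on $\nabla_x W_t(x-y)$ applied on dyadic annuli, using $\int_{2^kr<|y-x_0|<2^{k+1}r}|f-f_{2B}|^2\,dy\lesssim(2^kr)^{n+2\alpha}$. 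The reverse inclusion $T^{\alpha,2}\subseteq e^{t\Delta}\mathcal{L}_{2,n+2\alpha}$ is the more delicate half: from $\partial_tu=\Delta u$ together with a Poincar\'e inequality on balls and the $T^{\alpha,2}$ estimate, one shows that $u(\cdot,s)$, recentered by an appropriate ball average, is Cauchy in $L^2_{\mathrm{loc}}$ as $s\to 0^+$, producing a trace $f$. Passing to the limit in the oscillation estimate and summing the $T^{\alpha,2}$ control over dyadic scales then yields $f\in\mathcal{L}_{2,n+2\alpha}$, and uniqueness of caloric extensions of mild growth gives $u=e^{t\Delta}f$.

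For (ii), the argument is entirely parallel. The extra weight $t^\alpha$ absorbs the fractional Laplacian via the commutation $(-\Delta)^{\alpha/2}e^{t\Delta}=e^{t\Delta}(-\Delta)^{\alpha/2}$: the Plancherel step becomes $\int_0^\infty\|\nabla e^{t\Delta}(-\Delta)^{\alpha/2}g\|_2^2\,t^\alpha\,dt\lesssim\|g\|_2^2$, a direct consequence of the spectral theorem (valid precisely for $\alpha>-1$), and the far-field estimates use $|\nabla_x(-\Delta)^{\alpha/2}W_t(x)|\lesssim t^{-(n+1+\alpha)/2}(1+|x|^2/t)^{-M}$ for large $M$. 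For (iii), the inclusion $\mathop\mathrm{CB}\subseteq\mathcal{T}^{\beta,2}$ is immediate from $|\nabla u|^2 t^\beta\lesssim t^{\beta-1}$ and $\int_0^{r^2}t^{\beta-1}\,dt\sim r^{2\beta}$. For the converse $\mathcal{T}^{\beta,2}\subseteq\mathop\mathrm{CB}$, an interior $L^2$-to-$L^\infty$ estimate for caloric functions on the parabolic cylinder $B(x_0,\sqrt{t_0}/2)\times(t_0/2,t_0)$, combined with the $\mathcal{T}^{\beta,2}$ control at scale $r=\sqrt{t_0}$, yields $|\nabla u(x_0,t_0)|^2\lesssim t_0^{-1}\|u\|_{\mathcal{T}^{\beta,2}}^2$; the identification with $e^{t\Delta}(-\Delta)^{\beta/2}\mathcal{L}_{2,n+2\beta}$ is then inherited from part (ii).

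The main obstacle, exactly as in the harmonic case, will be the reverse inclusion for negative $\alpha$: the $T^{\alpha,2}$ (respectively $\mathcal{T}^{\alpha,2}$) integral scales as $r^{n+2\alpha}$, which is smaller than $r^n$, so the boundary-trace construction must exploit this additional decay carefully, and the dyadic summation together with the choice of normalizing averages requires more attention than in the classical case $\alpha=0$. A secondary subtlety is verifying the norm equivalence modulo constants in a way compatible with the two-sided bound across the entire range $\alpha\in(-1,1)$.
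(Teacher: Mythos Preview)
Your forward inclusions in (i)--(ii) and both directions of (iii) line up with the paper's treatment; the substantive differences are in the two reverse inclusions.

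For $T^{\alpha,2}\subseteq e^{t\Delta}\mathcal{L}_{2,n+2\alpha}$ the paper does \emph{not} run a Cauchy-in-$L^2_{\mathrm{loc}}$ argument via Poincar\'e. Instead it transports verbatim the three-step scheme from Theorem \ref{t21}: set $f_k:=u(\cdot,k^{-2})$, use the caloric analogue of Lemma \ref{l22}(iii), namely $u(x,t+k^{-2})=e^{t\Delta}f_k(x)$, together with the heat-semigroup characterization of $\mathcal{L}_{2,n+2\alpha}$ to obtain the uniform bound $\sup_k\|f_k\|_{\mathcal{L}_{2,n+2\alpha}}\lesssim\|u\|_{T^{\alpha,2}}$; then extract a weak $L^2(B(0,2^m))$ limit of $f_k-(f_k)_{B(0,2^m)}$ on an exhausting family of balls to build $f$; finally recover $\nabla u=\nabla e^{t\Delta}f$ by splitting the kernel integral over $B(0,2^m)$ and its complement and letting $k,m\to\infty$. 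This sidesteps precisely the obstacle you identify for $\alpha<0$: weak compactness needs only boundedness of the recentered slices, not a Cauchy estimate, and the uniform Campanato bound comes for free from the forward characterization applied to the translated solution $u(\cdot,\cdot+k^{-2})$. Your Poincar\'e route, as written, would have to control $\partial_t u=\Delta u$ (one derivative beyond what $T^{\alpha,2}$ gives directly), so it needs an additional interior regularity step you have not spelled out.

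For (ii) the paper again takes a shorter path than your direct argument: rather than redoing the trace construction with the weight $t^\alpha$ and the fractional-kernel bound, it proves the caloric analogue of Lemma \ref{l33}, $\|u\|_{\mathcal{T}^{\alpha,2}}\sim\|(-\Delta)^{-\alpha/2}u\|_{T^{\alpha,2}}$, notes that $(-\Delta)^{-\alpha/2}u$ is still caloric, and then simply invokes part (i). Your approach should also succeed, but the reduction avoids repeating the weak-limit machinery and packages the $t^\alpha$ weight into a single norm equivalence.
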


Notice that, for $\alpha\in (-1,1)$, the spaces $(-\Delta)^{\frac\alpha 2}\mathcal{L}_{2,n+2\alpha}$ are scaling invariant
and they obey the inclusions
$$
 Q_{\beta}=(-\Delta)^{-\frac\beta 2}\mathcal{L}_{2,n-2\beta}\subseteq {\mathrm{BMO}}=(-\Delta)^{0}\mathcal{L}_{2,n}
 \subseteq (-\Delta)^\frac{\beta}{2} \mathcal{L}_{2,n+2\beta}$$
for $\beta \in (0,1)$;  see Lemma \ref{l31} below.
We next introduce the spaces $\big((-\Delta)^\frac{\alpha}{2}\mathcal{L}_{2,n+2\alpha}\big)^{-1}$ and consider the
solvability of the three-dimensional incompressible Navier-Stokes system with initial values in these spaces.

For $T\in (0,\infty]$ and $\alpha\in (-1,1)$, the space
$\big((-\Delta)^\frac{\alpha}{2}\mathcal{L}_{2,n+2\alpha}\big)_T^{-1}$ is defined as the collection
of all functions $f$ on $\mathbb R^{n}$ satisfying
$$
\|f\|_{n+2\alpha,-1,T}:=\left(\sup_{(x_0,r)\in\mathbb R^{n}\times(0,T)}r^{-(2\alpha+n)}{\int_0^{r^2}\int_{B(x_0,r)}|e^{t\Delta}f(y)|^2 t^{\alpha}\,dy\,dt}\right)^\frac12<\infty,
$$
and its associated space $X_{n+2\alpha,T}$ of all functions $u$ on $\mathbb R^{n+1}_+$ satisfying
\begin{align*}
\|{u}\|_{X_{n+2\alpha,T}}&:=\sup_{(x,t)\in\mathbb R^{n}\times(0,T]}\sqrt{t}|u(x,t)|\\
&\quad+\left(\sup_{(x_0,r)\in\mathbb R^n\times(0,T)}r^{-(2\alpha+n)}{\int_0^{r^2}\int_{B(x_0,r)}|{u}(y,t)|^2 t^{\alpha}\,dy\,dt}\right)^\frac12<\infty.
\end{align*}

For $\alpha\in (-1,0]$, from \cite{EJPX,WX,Xiao1}, it follows that
$$(-\Delta)^\frac{\alpha}{2}\mathcal{L}_{2,n+2\alpha}=Q_{-\alpha}$$
($Q_{0}={\mathrm{BMO}}$), hence the spaces $\big((-\Delta)^\frac{\alpha}{2}\mathcal{L}_{2,n+2\alpha}\big)_T^{-1}$
are just the spaces ${\mathrm{BMO}}^{-1}_T$ and $Q_{-\alpha,T}^{-1}$
considered in \cite{KT, Xiao1, Xiao2}. Interestingly, when $\alpha\in (0,1)$, the spaces $\big((-\Delta)^\frac{\alpha}{2}\mathcal{L}_{2,n+2\alpha}\big)_\infty^{-1}$ turn out to be the space
$\dot{B}^{-1,\infty}_\infty$; see Theorem \ref{t42} below. Thus, the spaces $(-\Delta)^{\frac{\alpha}{2}}\mathcal{L}_{2,n+2\alpha}$
naturally unify the spaces $Q^{-1}_{\alpha}$, ${\mathrm{BMO}}^{-1}$ and $\dot{B}^{-1,\infty}_\infty$,
which explicitly displays the intrinsic and subtle connections existing in theses spaces and hence
can be considered as one of the most significant contributions of this article.

Now we consider the three-dimensional incompressible Navier-Stokes
system with the pressure function $p\equiv p(x,t)$:
$$
(\hbox{3D-N-S})\quad\quad\left\{\begin{array}{rl}
(\Delta-\partial_t){\bf u}-{\bf u}\cdot\nabla{\bf u}-\nabla p=0\ \ &\hbox{on}\ \ \mathbb R^{3}\times(0,T);\\
\nabla\cdot{\bf u}=0\ \ &\hbox{on}\ \ \mathbb R^3;\\
{\bf u}(\cdot,0)={\bf a}(\cdot)\ \ &\hbox{on}\ \ \mathbb R^3;\\
\nabla\cdot{\bf a}=0\ \ &\hbox{on}\ \ \mathbb R^3.
\end{array}
\right.
$$
According to \cite{Kato, Kato1, GM, Tay}, for each ${\bf a}\in \big((-\Delta)^\frac{\alpha}{2}\mathcal{L}_{2,n+2\alpha}\big)_T^{-1}$
where $\alpha\in (-1,1)$, the equation has a unique mild solution ${\bf u}=(u_1,u_2,u_3)$, i.\,e., a solution to the integral equation:
$$
{\bf u}(x,t)=e^{t\Delta}{\bf a}(x)-\int_0^t e^{(t-s)\Delta}P\nabla\cdot({\bf u}\otimes{\bf u})\,ds,
$$
where
$$
\left\{\begin{array}{r@{}l}
e^{t\Delta}{\bf a}(x)=(e^{t\Delta}a_1(x),e^{t\Delta}a_2(x),e^{t\Delta}a_3(x));\\
P=\{P_{jk}\}_{j,k=1,2,3}=\{\delta_{jk}+R_jR_k\}_{j,k=1,2,3};\\
\delta_{jk}=\hbox{Kronecker\ symbol};\\
R_j=\partial_j(-\Delta)^{-\frac12}=\hbox{Riesz\ transform}.
\end{array}
\right.
$$
Using results from \cite{KT, Xiao1, Xiao2, BP} and the scaling invariant nature of
$(-\Delta)^{\frac{\alpha}{2}}\mathcal{L}_{2,n+2\alpha}$, it readily follows that
the solution ${\bf u}$ admits
$\|{\bf u}\|_{X_{3+2\alpha,T}}:=\sum_{j=1}^3\|{ u}_j\|_{X_{3+2\alpha,T}}<\infty$ for any small norm
$$\|{\bf a}\|_{3+2\alpha,-1,T}:=\sum_{j=1}^3\|a_j\|_{3+2\alpha,-1,T}$$
 when $\alpha\in (-1,0]$, however a norm inflation in finite time when $\alpha\in (0,1)$; see Corollary \ref{c41} below.

In summary, we have the following helpful structure table on the relations 
between our results and known ones, which clearly
indicates how the results in this article construct a complete bridge 
connecting known conclusions on $Q^{-1}_{\alpha}$, ${\mathrm{BMO}}^{-1}$ and $\dot{B}^{-1,\infty}_\infty$:
\begin{center}
\begin{tabular}{|p{2.71cm}|p{3.8cm}|p{2.5cm}|p{3.8cm}|}
\hline
& $\alpha\in (-1,0)$ &$\alpha=0$ & $\alpha\in (0,1)$  \\ \hline
$\mathcal{L}_{2,n+2\alpha}$ & Square Morrey \cite{Mor} & ${\mathrm{BMO}}$ \cite{JN} & ${\mathop\mathrm{Lip}}\,\alpha$ \cite{FJN, Mey, OT, Tor}\\ \hline
$(-\Delta)^{\frac{\alpha}{2}}\mathcal{L}_{2,n+2\alpha}$ & $Q_{-\alpha}$ \cite{Xiao1, Xiao2, WX} & ${\mathrm{BMO}}$ \cite{JN} & $(-\Delta)^\frac{\alpha}{2}{\mathop\mathrm{Lip}}\,\alpha$ \cite{Str1, Str2}\\ \hline
$H^{\alpha,2}$ & Theorem \ref{t21} & ${\mathrm{HMO}}$ \cite{FJN} & Theorem \ref{t21}/ \cite{FJN} \\ \hline
$\mathcal{H}^{\alpha,2}$ & Theorem \ref{t31}(i) & ${\mathrm{HMO}}$ \cite{FJN} & Harmonic Bloch \cite{RY} -Theorem \ref{t31}(ii)\\ \hline
$T^{\alpha,2}$ & Theorem \ref{t41}(i) & ${\mathrm{TMO}}$ \cite{FN} & Theorem \ref{t41}(i)\\ \hline
$\mathcal{T}^{\alpha,2}$ & Theorem \ref{t41}(ii) & ${\mathrm{TMO}}$ \cite{FN} & Caloric Bloch \cite{H} - Theorem \ref{t41}(iii)\\ \hline
$\big((-\Delta)^{\frac{\alpha}{2}}\mathcal{L}_{2,n+2\alpha}\big)_\infty^{-1}$& $Q_{-\alpha}^{-1}$ - Theorem \ref{t42}(i) / \cite{Xiao1, Xiao2} & ${\mathrm{BMO}}^{-1}$ \cite{KT} & $ \dot{B}^{-1,\infty}_\infty$ - Theorem \ref{t42}(ii) / \cite{BP}\\ \hline
$\hbox{3D-N-S}$& Well-posed - Corollary \ref{c41}(i)  /
\cite{Xiao1, Xiao2} & Well-posed \cite{KT} & Ill-posed-Corollary \ref{c41}(ii) / \cite{BP}\\ \hline
\end{tabular}
\end{center}

This article  is organized as follows. Theorem \ref{t21} is 
proved in Section 2, and Theorem \ref{t31} is proved in Section 3.
Theorem \ref{t41} and Theorem \ref{t42} will be proved in Section 4.

{\it Notation}. From now on, $U\lesssim V$ will stand for $U\le C V$ for a constant $C>0$ which is independent of the main parameters.
The symbol $U\sim V$ stands for $U\lesssim V\lesssim  U$.

\section{Both $\mathcal{L}_{2,n+2\alpha}$ and $H^{\alpha,2}$}\label{s2}

\subsection{The Campanato spaces $\mathcal{L}_{2,n+2\alpha}$}\label{s21}
\hskip\parindent We begin with giving a basic structure of the square Campanato space.

\begin{lem}\label{l21} Let $\alpha\in (-1,1)$.

{\rm(i)} For each measurable function $f$ on $\rr^n$, it holds true that
\begin{align*}
\|f\|_{{\mathcal L}_{2,n+2\alpha}}&\sim \lf(\sup_{(x_0,r)\in\mathbb R^{n+1}_+}r^{-2(\alpha+n)}\iint_{B(x_0,r)\times B(x_0,r)}|f(y)-f(z)|^2\,dy\,dz\r)^{1/2}\\
&\sim \lf(\sup_{(x_0,r)\in\mathbb R^{n+1}_+}r^{-(2\alpha+n)}\int_{B(x_0,r)}\int_0^r|\nabla_{x,t} e^{-t\sqrt{-\Delta}} f(x)|^2\,t\,dt\,dx\r)^{1/2}.
\end{align*}

{\rm(ii)} $\mathcal{L}_{2,n+2\alpha}$ is scaling variant except $\alpha=0$, i.\,e.,
$$
f\in\mathcal{L}_{2,n+2\alpha}\Longrightarrow \|f(\lambda\cdot)\|_{{\mathcal L}_{2,n+2\alpha}}=\lambda^{\alpha}\|f\|_{{\mathcal L}_{2,n+2\alpha}},\quad\forall \lambda>0.
$$

{\rm(iii)} There is no mutual inclusion among the family $\mathcal{L}_{2,n+2\alpha}$.
\end{lem}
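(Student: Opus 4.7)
The plan is to address the three parts in a convenient order. Part (ii) is a direct change of variables, part (i) is the substantive step, and part (iii) follows from (ii) together with explicit homogeneous examples.

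For part (ii), substitute $z=\lambda y$ in the integral defining the Campanato norm of $g(x):=f(\lambda x)$. The change of variables sends $B(x_0,r)$ to $B(\lambda x_0,\lambda r)$, identifies $g_{B(x_0,r)}$ with $f_{B(\lambda x_0,\lambda r)}$, and produces a Jacobian factor $\lambda^{-n}$; rewriting $r^{-(n+2\alpha)}=\lambda^{n+2\alpha}(\lambda r)^{-(n+2\alpha)}$ and taking suprema yields $\|g\|_{\mathcal{L}_{2,n+2\alpha}}=\lambda^{\alpha}\|f\|_{\mathcal{L}_{2,n+2\alpha}}$, as desired.

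Part (i) splits into two equivalences. The first is purely algebraic: writing $f(y)-f(z)=[f(y)-f_B]-[f(z)-f_B]$ and expanding gives $\iint_{B\times B}|f(y)-f(z)|^2\,dy\,dz\le 4|B|\int_B|f(y)-f_B|^2\,dy$, while the identity $f(y)-f_B=|B|^{-1}\int_B[f(y)-f(z)]\,dz$ together with Jensen's inequality yields the reverse bound. For the Carleson-type equivalence through the Poisson semigroup, I would argue both directions by a standard decomposition. To show that the Campanato norm dominates the Carleson integral, I fix a ball $B=B(x_0,r)$ and write $f=f_{2B}+(f-f_{2B})\chi_{2B}+(f-f_{2B})\chi_{(2B)^c}=:c+f_1+f_2$. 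The constant $c$ is killed by $\nabla_{x,t}$; the local piece $f_1$ lies in $L^2$ with norm bounded by $r^{(n+2\alpha)/2}\|f\|_{\mathcal{L}_{2,n+2\alpha}}$, so the classical $g$-function identity
$$\int_{\rn}\int_0^\infty|\nabla_{x,t}e^{-t\sqrt{-\Delta}}f_1|^2\,t\,dt\,dx\sim\|f_1\|_{L^2(\rn)}^2$$
closes the local estimate. The tail $f_2$ is handled by the pointwise bound $|\nabla_{x,t}P_t(x-y)|\lesssim t(|x-y|+t)^{-n-2}$, a dyadic annular decomposition of $(2B)^c$, and the standard Campanato estimate $|f_{2^{k+1}B}-f_{2B}|\lesssim k(2^k r)^{\alpha}\|f\|_{\mathcal{L}_{2,n+2\alpha}}$, which together produce a geometric series that converges precisely because $\alpha<1$. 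For the converse direction, I would recover the oscillation from the Carleson data via the identity $f(y)=u(y,r)-\int_0^r\partial_t u(y,t)\,dt$, where $u:=e^{-t\sqrt{-\Delta}}f$: harmonic gradient estimates bound $u(y,r)-u(z,r)$ on $B$ by the Carleson integral over a slight enlargement of $B\times(0,r)$, and Cauchy--Schwarz in the $t$ variable against the weight $t\,dt$ controls the integrated $\partial_t u$ contribution to $|f(y)-u(y,r)|^2$ averaged over $y\in B$.

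For part (iii), I combine (ii) with explicit examples. If $\mathcal{L}_{2,n+2\alpha}\subseteq\mathcal{L}_{2,n+2\beta}$ were a continuous inclusion for some $\alpha\neq\beta$ in $(-1,1)$, then part (ii) applied to $f(\lambda\cdot)$ would force the ratio $\lambda^{\beta-\alpha}\|f\|_{\mathcal{L}_{2,n+2\beta}}/\|f\|_{\mathcal{L}_{2,n+2\alpha}}$ to remain bounded as $\lambda$ ranges over $(0,\infty)$, which is impossible for any nontrivial $f$ in the intersection. To establish set-theoretic non-inclusion, I would exhibit functions with sharp homogeneity: $x\mapsto|x_1|^{\alpha}$ lies in $\mathcal{L}_{2,n+2\alpha}$ for $\alpha\in(0,1)$ but in no $\mathcal{L}_{2,n+2\beta}$ with $\beta\neq\alpha$, while for nonpositive indices Morrey-type test functions concentrated near a single scale serve the same purpose. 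The most delicate point of the whole lemma is the tail sum for the Poisson extension of $f_2$: its summability is the exact quantitative manifestation of the constraint $\alpha<1$, and for $\alpha\in(-1,0)$ one must additionally verify that the Poisson integral converges at all, which follows from the polynomial growth $|f(y)-f_{B(0,1)}|\lesssim(1+|y|)^{\alpha+\varepsilon}$ inherent to the Campanato norm modulo constants.
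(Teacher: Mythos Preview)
The paper itself does not prove this lemma: it defers to \cite{Xiao1} for $\alpha\in(-1,0)$ and \cite{FJN} for $\alpha\in[0,1)$, together with direct computation for (ii) and (iii). Your proposal is an attempt at a self-contained argument, and most of it is fine: (ii) is correct, the double-integral equivalence in (i) is correct, (iii) is reasonable, and the forward direction of the Carleson characterization (Campanato controls the Carleson integral) via the decomposition $f=f_{2B}+(f-f_{2B})\chi_{2B}+(f-f_{2B})\chi_{(2B)^c}$ is exactly what the paper itself uses later in the proof of Lemma~\ref{l33}. One small correction there: the bound $|\nabla_{x,t}P_t(x-y)|\lesssim t(|x-y|+t)^{-n-2}$ fails for the time derivative, since $\partial_t P_t(x)=c_n(|x|^2-nt^2)(|x|^2+t^2)^{-(n+3)/2}$ is of order $|x|^{-n-1}$ when $|x|\gg t$; the correct bound $|\nabla_{x,t}P_t(x-y)|\lesssim (|x-y|+t)^{-n-1}$ still makes your tail series converge for all $\alpha<1$, so nothing is lost.

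The real gap is your converse direction when $\alpha\in(-1,0]$. Writing $f(y)-u(y,r)=-\int_0^r\partial_t u(y,t)\,dt$ and applying Cauchy--Schwarz against the weight $t\,dt$ forces the dual factor $\int_0^r t^{-1}\,dt=\infty$, so the step does not close. If instead you use Minkowski and the pointwise bound $|\partial_t u(y,t)|\lesssim t^{\alpha-1}\|u\|_{H^{\alpha,2}}$ from Lemma~\ref{l22}(i), you get $\int_0^r t^{\alpha-1}\,dt$, which converges only for $\alpha>0$. For $\alpha\le 0$ this direction is genuinely the hard half of the Carleson--Campanato equivalence (it is the Fefferman--Stein theorem when $\alpha=0$) and requires either a reproducing-formula argument carrying an extra $t$-derivative, a tent-space/duality argument of the type described in Remark~\ref{r21}, or the specific machinery in \cite{Xiao1}; a one-line Cauchy--Schwarz in $t$ cannot produce it.
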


\begin{proof} This follows from some simple computations, \cite[Theorem 1.2(i) \& Lemma 2.1]{Xiao1} for $\alpha\in (-1,0)$ and \cite[Theorems 2.1 and 2.2]{FJN} for $\alpha\in [0,1)$.
\end{proof}

\subsection{The harmonic spaces $H^{\alpha,2}$}
\hskip\parindent
The following lemma comes from a deep understanding of \cite[Lemmas 2.4 and 2.5]{FJN} and \cite[Lemmas 1.3 and 1.4]{FJN}.

\begin{lem}\label{l22} For $\alpha\in (-1,1)$, one has

{\rm(i)} If $u\in H^{\alpha,2}$, then $|\nabla_{x,t} u(x,t)|\lesssim t^{\alpha-1}\|u\|_{H^{\alpha,2}},\ \ \forall\,(x,t)\in\mathbb R^{n+1}_+$.

{\rm(ii)} If $u$ is a $C^1$ function obeying $|\nabla_{x,t} u(x,t)|\lesssim t^{\alpha-1}$, $\forall\,(x,t)\in\mathbb R^{n+1}_+$, then
$$
|u(x,t)-u(x_0,t)|\lesssim
\begin{cases}
t^{\alpha-1}|x-x_0|,\quad&\hbox{as}\quad |x-x_0|\le t;\\
\begin{cases}
\max\{1,|\alpha|^{-1}\}|x-x_0|^\alpha\ \ &\hbox{for}\  \alpha>0,\\
\log(t^{-1}|x-x_0|)\ \ &\hbox{for}\ \ \alpha=0,\\
\max\{1,|\alpha|^{-1}\}t^\alpha\ \ &\hbox{for}\  \alpha<0,
\end{cases}\quad&\hbox{as}\  |x-x_0|>t,
\end{cases}
$$
and hence $e^{-t\sqrt{-\Delta}} u(\cdot,k^{-1})(x)$ exists everywhere on $\mathbb R^{n+1}_+$ for any natural number $k$.

{\rm(iii)} If $u\in H^{\alpha,2}$, $u_k(x,t):= u(x,t+k^{-1})$ and $\lambda>0$, then
$$
\lf\{\begin{array}{rl}
u_k(x,t)&=e^{-t\sqrt{-\Delta}} u(\cdot,k^{-1})(x);\\
\|u_k\|_{H^{\alpha,2}}&\lesssim \|u\|_{H^{\alpha,2}};\\
\|u(\lambda\cdot,\lambda\cdot)\|_{H^{\alpha,2}}&=\lambda^{2(\alpha-1)}\|u\|_{H^{\alpha,2}},
\end{array}\r.
$$
and hence there is no mutual inclusion among the family $H^{\alpha,2}$.
\end{lem}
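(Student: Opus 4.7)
\smallskip

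\noindent\textbf{Plan.} The three parts share a common principle: use interior regularity of harmonic functions to convert the Carleson-type control in the definition of $H^{\alpha,2}$ into a pointwise bound on $\nabla_{x,t}u$, and then recover integral information after time-translation or dilation.

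For part (i), each component of $\nabla_{x,t}u$ is harmonic on $\mathbb R^{n+1}_+$, so $|\nabla_{x,t}u|^2$ is subharmonic. The sub-mean-value inequality on the Euclidean ball $B((x,t),t/2)\subset\mathbb R^{n+1}_+$, where the height coordinate satisfies $s\sim t$, gives
$$
|\nabla_{x,t}u(x,t)|^2\lesssim t^{-(n+2)}\int_{B(x,t/2)}\int_{t/2}^{3t/2}|\nabla_{y,s}u(y,s)|^2\,s\,ds\,dy.
$$
Enlarging the $s$-interval to $(0,3t/2)$ and applying the definition of $\|u\|_{H^{\alpha,2}}$ with radius $r=3t/2$ yields $|\nabla_{x,t}u(x,t)|^2\lesssim t^{2\alpha-2}\|u\|_{H^{\alpha,2}}^2$, as claimed.

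For part (ii), I would integrate $\nabla_{x,t}u$ along piecewise linear paths. When $|x-x_0|\le t$, a single horizontal segment from $(x_0,t)$ to $(x,t)$ suffices, producing $|u(x,t)-u(x_0,t)|\lesssim t^{\alpha-1}|x-x_0|$. When $|x-x_0|>t$, set $R:=|x-x_0|$ and use the three-leg path $(x_0,t)\to(x_0,R)\to(x,R)\to(x,t)$; each vertical leg contributes $\int_t^R s^{\alpha-1}\,ds$, equal to $\alpha^{-1}(R^\alpha-t^\alpha)$, $\log(R/t)$, or $|\alpha|^{-1}(t^\alpha-R^\alpha)$ according to the sign of $\alpha$, while the horizontal leg at height $R$ contributes $R\cdot R^{\alpha-1}=R^\alpha$. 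Summing yields the stated dichotomy. All three bounds grow at most like $|x-x_0|^\alpha$ (with a logarithm when $\alpha=0$), and since $\alpha<1$ this growth is integrable against the Poisson kernel, so $e^{-t\sqrt{-\Delta}}u(\cdot,k^{-1})(x)$ exists on $\mathbb R^{n+1}_+$ for every $k\in\mathbb N$.

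For part (iii), the identity $u_k=e^{-t\sqrt{-\Delta}}u(\cdot,k^{-1})$ is a uniqueness statement for the Poisson problem in a polynomial-growth class: both sides are harmonic on $\mathbb R^{n+1}_+$, share the trace $u(\cdot,k^{-1})$ at $t=0$, and are controlled by the growth estimate from (ii), hence coincide by a Phragm\'en--Lindel\"of-type argument. For the norm bound $\|u_k\|_{H^{\alpha,2}}\lesssim\|u\|_{H^{\alpha,2}}$, I split into two regimes. When $r\ge k^{-1}$, the substitution $s=t+k^{-1}$ in the Carleson integral reduces it, modulo the factor from $r+k^{-1}\le 2r$, to the corresponding integral for $u$. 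When $r<k^{-1}$, I use part (i) pointwise to get $|\nabla u_k(x,t)|\lesssim(t+k^{-1})^{\alpha-1}\|u\|_{H^{\alpha,2}}\sim k^{1-\alpha}\|u\|_{H^{\alpha,2}}$, and a direct calculation bounds the integral by $r^{n+2}k^{2(1-\alpha)}\|u\|_{H^{\alpha,2}}^2=r^{n+2\alpha}(rk)^{2(1-\alpha)}\|u\|_{H^{\alpha,2}}^2\le r^{n+2\alpha}\|u\|_{H^{\alpha,2}}^2$, using $rk<1$ and $\alpha<1$. The scaling identity is a routine change of variables in $y=\lambda x$, $s=\lambda t$, and the non-inclusion of the family $\{H^{\alpha,2}\}$ follows because distinct values of $\alpha$ produce distinct scaling degrees, so no single $u$ can sit in two such spaces while the dilates $u(\lambda\cdot,\lambda\cdot)$ remain controlled in both.

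The delicate step is the semigroup identity in (iii), which cannot be extracted from standard Poisson-extension uniqueness since $u(\cdot,k^{-1})$ need neither be bounded nor lie in $L^p$, and instead requires a uniqueness theorem tailored to the growth class from (ii). A secondary subtlety is the $r<k^{-1}$ regime in the translation bound, which closes only because $\alpha<1$; this accords with the expected sharpness of the range $\alpha\in(-1,1)$.
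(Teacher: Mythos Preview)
Your treatment of (i), (ii), and the two-case norm bound in (iii) matches the paper's argument essentially verbatim (the paper cites \cite{FJN} for (i)--(ii) and carries out the same $kr\ge 1$ versus $kr<1$ split you describe).

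The one substantive difference is in the semigroup identity $u_k=e^{-t\sqrt{-\Delta}}u(\cdot,k^{-1})$. You correctly flag this as the delicate step and propose a Phragm\'en--Lindel\"of-type uniqueness result for the Poisson problem in a polynomial-growth class, applied to $u_k$ and the Poisson extension directly. The paper takes a cleaner route that sidesteps this: it works at the level of the \emph{gradient}. By (i), $|\nabla_{x,t}u_k(x,t)|\lesssim(t+k^{-1})^{\alpha-1}\le k^{1-\alpha}$, which is \emph{bounded} on $\mathbb R^{n+1}_+$ because $\alpha<1$. Thus each component of $\nabla_{x,t}u_k$ is a bounded harmonic function with boundary trace $\nabla_{x,t}u(\cdot,k^{-1})$, and the standard maximum principle gives $\nabla_{x,t}u_k=\nabla_{x,t}e^{-t\sqrt{-\Delta}}u(\cdot,k^{-1})$. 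Hence $u_k-e^{-t\sqrt{-\Delta}}u(\cdot,k^{-1})$ is a constant $c_k$, and letting $t\to0$ forces $c_k=0$. Your approach is workable in principle but requires justifying a uniqueness theorem for harmonic functions of sub-linear growth that you leave unspecified; the paper's gradient trick reduces everything to the bounded case and is both shorter and self-contained.
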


\begin{proof} Only (iii) needs a verification. Notice that $u_k(x,t)$ and $e^{-t\sqrt{-\Delta}} u(\cdot,k^{-1})(x)$ tend to $u(x,k^{-1})$ as $t\to 0$ and $|\nabla_{x,t} u_k(x,t)|\lesssim (t+k^{-1})^{\alpha-1}$ (due to (i)-(ii) above). So, $\partial_j u(x,t+k^{-1})$ and $\partial_t u(x,t+k^{-1})$ are harmonic on $\mathbb R^{n+1}_+$ for each $j\in\{1,2,...,n\}$ and take $\partial_j u(x,k^{-1})$ and $\partial_t u(x,k^{-1})$ as their pointwise limits as $t\to 0$. An application of the maximum principle for harmonic functions yields
$$
\nabla_{x,t} u(x,t+k^{-1})=e^{-t\sqrt{-\Delta}}\nabla_{x,t} u(\cdot,k^{-1})(x)=\nabla_{x,t} e^{-t\sqrt{-\Delta}} u(\cdot,k^{-1})(x).
$$
Thus, there is a constant $c_k$ such that
$$
u(x,t+k^{-1})-e^{-t\sqrt{-\Delta}} u(\cdot,k^{-1})(x)=c_k.
$$
Letting $t\to 0$ in the last equation derives $c_k=0$. So, the desired equality follows.

To show the estimate in (iii), we consider two cases.

{\it Case 1: $kr\ge 1$.} Using $s=t+k^{-1}$, we see that
\begin{eqnarray*}
&&r^{-(2\alpha+n)}\int_{B(x_0,r)}\int_0^r |\nabla_{x,t} u_k(x,t)|^2\,t\,dt\,dx\\
&&\quad\lesssim r^{-(2\alpha+n)}
\int_{B(x_0,r)}\int_0^{2r}|\nabla_{x,t} u(x,s)|^2\,s\,ds\,dx\lesssim\|u\|_{H^{\alpha,2}}^2.
\end{eqnarray*}

{\it Case 2: $kr<1$}. Using (i), we find that
\begin{align*}
r^{-(2\alpha+n)}\int_{B(x_0,r)}\int_0^r |\nabla_{x,t} u_k(x,t)|^2\,t\,dt\,dx
&\lesssim \|u\|^2_{H^{\alpha,2}}r^{-(2\alpha+n)}\int_{B(x_0,r)}\int_0^r t(t+k^{-1})^{2(\alpha-1)}\,dt\,dx\\
&\lesssim\|u\|^2_{H^{\alpha,2}}r^{-2\alpha}k^{2(1-\alpha)}\int_0^r t(kt+1)^{2(\alpha-1)}dt\\
&\lesssim \|u\|^2_{H^{\alpha,2}}(kr)^{2(1-\alpha)}\\
&\lesssim \|u\|^2_{H^{\alpha,2}}.
\end{align*}

Putting the above two cases together, one obtains
$\|u_k\|^2_{H^{\alpha,2}}\lesssim \|u\|^2_{H^{\alpha,2}},$
as desired.

The scaling formula follows directly from changing variables: $(x,t)\to (\lambda x,\lambda t)$.
\end{proof}

We are now in  a position to prove Theorem \ref{t21}. Notice that our arguments below work for all $\alpha\in (-1,1)$.

\begin{proof}[Proof of Theroem \ref{t21}]
The inclusion $H^{\alpha,2}\supseteq e^{-t\sqrt{-\Delta}}\mathcal{L}_{2,n+2\alpha}$ follows
from Lemma \ref{l21}(i). So, it remains to prove the inclusion $H^{\alpha,2}\subseteq e^{-t\sqrt{-\Delta}}\mathcal{L}_{2,n+2\alpha}$. To do so, suppose $u\in H^{\alpha,2}$. In order to obtain a function $f\in \mathcal{L}_{2,n+2\alpha}$ solving the convolution integral equation:
 $$
 u(x,t)=e^{-t\sqrt{-\Delta}} f(x),\quad\forall\,(x,t)\in \mathbb R^{n+1}_+, $$
 let $f_k(x):= u(x,k^{-1})$ for each $k\in\mathbb{N}:=\{1,2,\cdots\}$. Then three steps are required to complete our argument.

{\it Step 1} - establishing  $\sup_k\|f_k\|_{{\mathcal L}_{2,n+2\alpha}}\lesssim\|u\|_{H^{\alpha,2}}$.

By Lemma \ref{l22}(iii), we see that
$$
\sup_{(x_0,r)\in\mathbb R^{n+1}_+}r^{-(2\alpha+n)}\int_{B(x_0,r)}\int_0^r|\nabla_{x,t} e^{-t\sqrt{-\Delta}} f_k(x)|^2t\,dt\,dx\lesssim\|u\|^2_{H^{\alpha,2}}
$$
whence reaching the desired estimate via Lemma \ref{l21}(i).

{\it Step 2} - finding a function $f\in\mathcal{L}_{2,n+2\alpha}$ through $L^2(B(0,2^m))$-boundedness of $\{f_k-(f_k)_{B(0,2^m)}\}$ for each $m\in \mathbb{N}$.

According to {\it Step 1}, for a given natural number $m$, one has
$$
\int_{B(0,2^m)}|f_k(x)-(f_k)_{B(0,2^m)}|^2\,dx\lesssim\|u\|^2_{H^{\alpha,2}}2^{m(2\alpha+n)},\ \ \forall\, k\in \mathbb{N}.
$$
Namely, the sequence $\{f_k-(f_k)_{B(0,2^m)}\}_{k=1}^\infty$ is bounded in $L^2(B(0,2^m))$. So, after passing to a subsequence, the sequence converges weakly to a function $g_m\in L^2(B(0,2^m))$ with
$$
(g_m)_{B(0,2^m)}=\lim_{k\to\infty}\big(f_k-(f_k)_{B(0,2^m)}\big)_{B(0,2^m)}=0.
$$
Defining 
$$\delta\big(B(0,2),B(0,2^m)\big):= \lim_{k\to\infty}\lf((f_k)_{B(0,2)}-(f_k)_{B(0,2^m)}\r)$$ 
and, for all $x\in B(0,2^m)$, 
$$f(x):= g_m(x)-\delta\big(B(0,2),B(0,2^m)\big),$$
we readily see that $f$ is well defined on $\mathbb R^n=\cup_{m=1}^\infty B(0,2^m)$.

Notice that $f_1\in\mathcal{L}_{2,n+2\alpha}$. So, $f\in\mathcal{L}_{2,n+2\alpha}$  follows from checking $f-f_1\in\mathcal{L}_{2,n+2\alpha}$. To see this, fixing any open ball $B\subseteq\mathbb R^n$, choosing $B(0,2^m)$ to cover $B$, letting $g^B$ be the $L^2$-weak limit of $\{f_k-(f_k)_B\}_{k=1}^\infty$ on $B$, and using $g^B\in L^2(B)$, we find that
$$
g^B-g_m+\delta\big(B(0,2),B(0,2^m)\big)+f_B=0
$$
and, by the H\"older inequality, we further conclude that
\begin{align*}
\int_{B}|g^B(x)|^2\,dx&=\lim_{k\to\infty}\int_{B}g^B(x)(f_k(x)-(f_k)_B)\,dx\\
&\le\bigg[\int_{B}|g^B(x)|^2\,dx\bigg]^\frac12\limsup_{k\to\infty}\bigg[\int_{B}|f_k(x)-(f_k)_B|^2\,dx\bigg]^\frac12\\
&\lesssim |B|^\frac{n+2\alpha}{2n}\bigg[\int_{B}|g^B(x)|^2\,dx\bigg]^\frac12\|u\|_{H^{\alpha,2}}
\end{align*}
and hence
\begin{align*}
\int_{B}|f_1(x)-f(x)-(f_1)_B+f_B|^2\,dx&=\int_{B}|f_1(x)-(f_1)_B-g^B(x)|^2\,dx\\
&\lesssim \int_{B}|f_1(x)-(f_1)_B|^2\,dx+\int_B |g^B(x)|^2\,dx\\
&\lesssim |B|^\frac{n+2\alpha}{n}\big(\|f_1\|_{{\mathcal L}_{2,n+2\alpha}}^2+\|u\|^2_{H^{\alpha,2}}\big)\\
&\lesssim |B|^\frac{n+2\alpha}{n}\|u\|^2_{H^{\alpha,2}}.
\end{align*}
Consequently, $f_1-f\in \mathcal{L}_{2,n+2\alpha}$.

{\it Step 3} - verifying $u(x,t)=e^{-t\sqrt{-\Delta}} (f+c)(x)$ for some constant $c$.

Given $(x,t)\in\mathbb R^{n+1}_+$ and the standard Poisson kernel
$$
e^{-t\sqrt{-\Delta}}(x,y)=\Gamma\lf(\frac{n+1}{2}\r)\pi^{-\frac{n+1}{2}}t(|x-y|^2+t^2)^{-\frac{n+1}{2}},
$$
using the argument for Lemma \ref{l22}(iii) and the fact that
$\nabla_{x,t}e^{-t\sqrt{-\Delta}}(x,0)$ has integral zero on $\mathbb R^n$, we obtain
\begin{align*}
\nabla_{x,t} u(x,t+k^{-1})&=\int_{\mathbb R^n}\nabla_{x,t} e^{-t\sqrt{-\Delta}}(x,y)\big(f_k(y)-(f_k)_{B(0,2^m)}\big)\,dy\\
&= M_{k,m}(x,t)+N_{k,m}(x,t),
\end{align*}
where
$$
\begin{cases}
M_{k,m}(x,t):=\displaystyle\int_{B(0,2^m)}\nabla_{x,t} e^{-t\sqrt{-\Delta}}(x,y)\big(f_k(y)-(f_k)_{B(0,2^m)}\big)\,dy;\\
N_{k,m}(x,t):=\displaystyle\int_{\mathbb R^n\setminus B(0,2^m)}\nabla_{x,t} e^{-t\sqrt{-\Delta}}(x,y)\big(f_k(y)-(f_k)_{B(0,2^m)}\big)\,dy.
\end{cases}
$$
Notice that
$$
|\delta\big(B(0,2),B(0,2^m)\big)|\le|f_{B(0,2)}|+|f_{B(0,2^m)}-f_{B(0,2)}|\lesssim |f_{B(0,2)}|+m 2^{m\alpha}\|f\|_{{\mathcal L}_{2,n+2\alpha}}
$$
and
\begin{align*}
\Big|\int_{B(0,2^m)}\nabla_{x,t} e^{-t\sqrt{-\Delta}}(x,y)\,dy\Big|&=\Big|\int_{\mathbb R^n}\nabla_{x,t} e^{-t\sqrt{-\Delta}}(x,y)\,dy-
\int_{\mathbb R^n\setminus B(0,2^m)}\nabla_{x,t} e^{-t\sqrt{-\Delta}}(x,y)\,dy\Big|\\
&=\Big|\int_{\mathbb R^n\setminus B(0,2^m)}\nabla_{x,t} e^{-t\sqrt{-\Delta}}(x,y)\,dy\Big|\\
&\lesssim\int_{\mathbb R^n\setminus B(0,2^m)}|y-x|^{-n-1}\,dy\\
&\lesssim 2^{-m}.
\end{align*}
So, one has that, under $\alpha\in (-1,1)$,
\begin{align*}
\lim_{k,m\to\infty}M_{k,m}(x,t)
&=\lim_{m\to\infty}\lf[\int_{B(0,2^m)}\nabla_{x,t}
e^{-t\sqrt{-\Delta}}(x,y)f(y)\,dy\r.\\
&\hspace{0.6cm}\lf.+\delta\big(B(0,2),B(0,2^m)\big)\int_{B(0,2^m)}\nabla_{x,t} e^{-t\sqrt{-\Delta}}(x,y)\,dy\r]\\
&=\int_{\mathbb R^n}\nabla_{x,t} e^{-t\sqrt{-\Delta}}(x,y)f(y)\,dy.
\end{align*}

At the same time, noticing that $|x-y|\approx |y|$ for $y\in \mathbb R^n\setminus B(0,2^m)$ and the sufficiently large $m$, one finds
that
\begin{align*}
N_{k,m}(x,t)&\le\int_{\mathbb R^n\setminus B(0,2^m)}|\nabla_{x,t} e^{-t\sqrt{-\Delta}}(x,y)||f_k(y)-(f_k)_{B(0,2^m)}|\,dy\\
&\lesssim \int_{\mathbb R^n\setminus B(0,2^m)}|y|^{-n-1}|f_k(y)-(f_k)_{B(0,2^m)}|\,dy\\
&\lesssim \sum_{j=m+1}^\infty 2^{-j(n+1)}\int_{B(0,2^j)}|f_k(y)-(f_k)_{B(0,2^j)}+(f_k)_{B(0,2^j)}- (f_k)_{B(0,2^m)}|\,dy\\
&\lesssim\|u\|_{H^{\alpha,2}}\sum_{j=m+1}^\infty 2^{j(\alpha-1)}\big[1+(j-m)2^{-\alpha(j-m)}\big],
\end{align*}
whence deriving $\lim_{k,m\to\infty}N_{k,m}(x,t)=0$.

The above limits on $M_{k,m}(x,t)$ and $N_{k,m}(x,t)$ in turn give
$$
\nabla_{x,t} u(x,t)=\lim_{k\to\infty}\nabla_{x,t} u(x,t+k^{-1})=\nabla_{x,t} e^{-t\sqrt{-\Delta}} f(x).
$$
Hence, $u(x,t)=e^{-t\sqrt{-\Delta}} (f+c)(x)$ for some constant $c$, which finishes the proof of Theorem \ref{t21}.
\end{proof}

\begin{rem}\label{r21} \rm Under $\alpha\in (-1,0]$, a duality-based argument for $H^{\alpha,2}\subseteq e^{-t\sqrt{-\Delta}}\mathcal{L}_{2,n+2\alpha}$ can be provided here; see also \cite{FJN, FN, DYZ} for some related treatments. To do so,
let $H^{1,\alpha}_2$ be the set of
all functions $f\in L^1$ on $\mathbb R^n$ that satisfy
$\|f\|_{H^{1,\alpha}_2}:=\|{\mathcal A}_{2,\alpha}(f)\|_{L^1}<\infty,$
where
$$
{\mathcal A}_{2,\alpha}(f)(z):=
\lf(\int_0^\infty \int_{B(z,t)}|t\partial_t e^{-t\sqrt{-\Delta}} f(y)|^2\,{t^{2\alpha-n-1}}\,dy\,dt\r)^{\frac12},\quad\forall\, z\in\mathbb R^n.
$$
Then the dual space of $H^{1,\alpha}_2$ is $\mathcal{L}_{2,n+2\alpha}$; see also \cite{DXY, Xu, SXY}.
Importantly, each component of the vector $\nabla_{x,t}e^{-t\sqrt{-\Delta}}(x,0)$ is an element of $H^{1,\alpha}_2$. To see this, one utilizes
$$
e^{-(t+s)\sqrt{-\Delta}}(x,0)=\pi^{-\frac{n+1}{2}}\Gamma\lf(\frac{n+1}{2}\r)({t+s})
{\lf(|t+s|^2+|x|^2\r)^{-\frac{n+1}{2}}},\quad\forall\,s,t>0\ \ \&\, x\in\mathbb R^n,
$$
to derive
$$
\lf|t\partial_t e^{-t\sqrt{-\Delta}}\partial_{j}e^{-s\sqrt{-\Delta}}(x,0)\r|=\left|t\partial_{j}\partial_t e^{-(t+s)\sqrt{-\Delta}}(x,0)\r|\\
\lesssim{t}{\lf(|t+s|^2+|x|^2\r)^{-\frac{n+2}{2}}}.
$$
This implies that, under $\alpha\in (-1,0]$, one has
\begin{align*}
\lf[{\mathcal A}_{2,\alpha}(\partial_je^{-s\sqrt{-\Delta}}(x,0))(z)\r]^2&=
\int_0^\infty \int_{B(z,t)}|t\partial_t e^{-t\sqrt{-\Delta}}\partial_je^{-s\sqrt{-\Delta}}(x,0)|^2{t^{2\alpha-n-1}}\,dx\,dt\\
&\lesssim \int_0^\infty \int_{B(z,t)}{t^{1+2\alpha-n}}{\lf(|t+s|^2+|x|^2\r)^{-(n+2)}}{\,dx\,dt}\\
&\lesssim \int_0^\infty \int_{B(z,t)}{t^{1+2\alpha-n}}{\lf(|t+s|+|z|\r)^{-2(n+2)}}{\,dy\,dt}\\
&\lesssim {\lf(s+|z|\r)^{2(\alpha-n-1)}}.
\end{align*}
Meanwhile, by using the semigroup property, one derives
$$
\lf|t\partial_t e^{-t\sqrt{-\Delta}}\partial_s
e^{-s\sqrt{-\Delta}}(x,0)\r|=\left|t\partial_s\partial_t e^{-(t+s)\sqrt{-\Delta}}(x,0)\r|\lesssim{t}{\lf(|t+s|^2+|x|^2\r)^{-\frac{n+2}{2}}},
$$
whence reaching
$$
\big[{\mathcal A}_{2,\alpha}(\partial_s e^{-s\sqrt{-\Delta}}(x,0))(z)\big]^2\lesssim {\lf(s+|z|\r)^{2(\alpha-n-1)}}.
$$
The above estimates, via an $L^1$-estimate on $\mathbb R^n$, imply that each component of $\nabla_{x,t} e^{-t\sqrt{-\Delta}}(x,0)$ belongs to ${H^{1,\alpha}_2}$. Now, an application of {\it Step 1} deriving that $f_k$, up to passing a subsequence, is weakly convergent to a function $f\in \mathcal{L}_{2,n+2\alpha}$ with $\|f\|_{{\mathcal L}_{2,n+2\alpha}}\lesssim\|u\|_{H^{\alpha,2}}$.
Then, by using the duality $(H^{1,\alpha}_2)^\ast=\mathcal{L}_{2,n+2\alpha}$ and the fact that $u$ is harmonic in $\mathbb R^{n+1}_+$,
one has
\begin{align*}
\nabla_{x,t} u(x,t)&=\lim_{k\to \infty}\nabla_{x,t} u(x,t+k^{-1})\\
&=\lim_{k\to \infty}(\nabla_{x,t} e^{-t\sqrt{-\Delta}}(x,0))\ast f_k(x)\\
&=(\nabla_{x,t}e^{-t\sqrt{-\Delta}}(x,0))\ast f(x)\\
&=\nabla_{x,t} e^{-t\sqrt{-\Delta}} f(x),
\end{align*}
for which the last two $\ast$'s stand for the usual convolution. So, $u(x,t)=e^{-t\sqrt{-\Delta}} (f+c)(x)$ holds true for some constant $c$, as desired.
\end{rem}

\section{Both $(-\Delta)^\frac{\alpha}{2}\mathcal{L}_{2,n+2\alpha}$ and $\mathcal{H}^{\alpha,2}$}
\label{s3}

\subsection{The Campanato-Sobolev spaces $(-\Delta)^{\frac{\alpha}{2}}\mathcal{L}_{2,n+2\alpha}$}\label{s31}
\hskip\parindent The following lemma indicates that $(-\Delta)^{\frac{\alpha}{2}}\mathcal{L}_{2,n+2\alpha}$ exists as the scaling invariant version of $\mathcal{L}_{2,n+2\alpha}$.

 For $\beta\in [0,1)$, let $H^{\frac{n}{n+\beta}}\equiv H^{\frac{n}{n+\beta}}(\rn)$ be the Hardy space on $\rn$ introduced by \cite{FS}. We then define the space $(-\Delta)^{-\frac{\beta}{2}} H^{\frac{n}{n+\beta}}$ as the collection of distributions $f\in \mathcal S'$  with the norm
 $$\lf\|f\r\|_{(-\Delta)^{-\frac{\beta}{2}} H^{\frac{n}{n+\beta}}}:=\lf\|(-\Delta)^{\frac{\beta}{2}}f\r\|_{H^{\frac{n}{n+\beta}}}<\infty;$$
 see \cite{N} for more properties of such spaces. Here $\mathcal S'\equiv \mathcal S'(\rn)$ denotes the space of all Schwartz distributions
 on $\rn$.
 It readily follows, from the mapping property of the Riesz potential, that  $(-\Delta)^{-\frac{\beta}{2}} H^{\frac{n}{n+\beta}}\subset H^1$.
\begin{lem}
\label{l31}
Let $\alpha\in (-1,1)$.

{\rm(i)} For each measurable function $f$ on $\rr^n$, it holds true that
\begin{align*}
\|f\|_{(-\Delta)^{\frac{\alpha}{2}}\mathcal{L}_{2,n+2\alpha}}
&=\lf\|(-\Delta)^{-\frac{\alpha}{2}}f\r\|_{\mathcal{L}_{2,n+2\alpha}}\\
&\sim
\|f\|_{\mathcal{T}^{\alpha,2}}:=\left(\sup_{(x_0,r)\in\mathbb R^{n+1}_+}
\int_{B(x_0,r)}\int_0^r |\nabla_{x,t} e^{-t\sqrt{-\Delta}}(-\Delta)^{-\frac{\alpha}{2}}f(x)|^2\frac{t\,dt\,dx}{r^{2\alpha+n}}\right)^\frac12\\
&\sim\lf(\sup_{(x_0,r)\in\mathbb R^{n+1}_+}r^{-(2\alpha+n)}
\int_{B(x_0,r)}\int_0^r |\nabla_{x,t} (-\Delta)^{-\frac{\alpha}{2}}e^{-t\sqrt{-\Delta}} f(x)|^2{t\,dt\,dx}\r)^{\frac 12}.
\end{align*}

{\rm(ii)} $(-\Delta)^\frac{\alpha}{2}\mathcal{L}_{2,n+2\alpha}$ is scaling invariant, i.\,e.,
$$
f\in(-\Delta)^\frac{\alpha}{2}\mathcal{L}_{2,n+2\alpha}\Longrightarrow
\|f(\lambda \cdot)\|_{\mathcal{T}^{\alpha,2}}=\|f\|_{\mathcal{T}^{\alpha,2}},\ \ \forall\,\lambda>0.
$$

{\rm(iii)} For $\beta\in (0,1)$, one has
$$
(-\Delta)^{-\frac12}\mathcal{L}_{2,n-2}\subseteq Q_{\beta}\subseteq {\mathrm{BMO}}\subseteq (-\Delta)^\frac{\beta}{2} {\mathop\mathrm{Lip}}\,\beta= \lf((-\Delta)^{-\frac{\beta}{2}} H^{\frac{n}{n+\beta}}\r)^\ast.
$$
\end{lem}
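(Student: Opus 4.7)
The plan is to dispatch parts (i) and (ii) as formal consequences of Lemma \ref{l21} and the Fourier-multiplier calculus, then to concentrate the real work on the chain of inclusions in (iii).

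For (i), I would apply Lemma \ref{l21}(i) to $g:=(-\Delta)^{-\alpha/2}f$. The first equivalence of that lemma immediately identifies $\|g\|_{\mathcal{L}_{2,n+2\alpha}}$ with the Carleson-type square integral of $\nabla_{x,t} e^{-t\sqrt{-\Delta}} g$, which is precisely the second displayed norm. Since $(-\Delta)^{-\alpha/2}$ commutes, as a Fourier multiplier, with both $e^{-t\sqrt{-\Delta}}$ and $\nabla_{x,t}$, one has pointwise
$$
\nabla_{x,t} e^{-t\sqrt{-\Delta}}(-\Delta)^{-\alpha/2} f(x)=\nabla_{x,t}(-\Delta)^{-\alpha/2} e^{-t\sqrt{-\Delta}} f(x),
$$
which delivers the third characterization. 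Part (ii) reduces to the Fourier-substitution scaling identity
$$
(-\Delta)^{-\alpha/2}[f(\lambda\,\cdot)](x)=\lambda^{-\alpha}\lf[(-\Delta)^{-\alpha/2}f\r](\lambda x),\quad \lambda>0;
$$
combining it with Lemma \ref{l21}(ii) cancels the $\lambda^{\pm\alpha}$ factors and proves the claimed invariance.

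For (iii), I would verify the four assertions from left to right. The inclusion $(-\Delta)^{-1/2}\mathcal{L}_{2,n-2}\subseteq Q_\beta$ is the limiting case $\alpha\uparrow 1$ of the known nesting $Q_\alpha\subseteq Q_\beta$ for $0<\beta<\alpha<1$ (see \cite{EJPX, Xiao1}), combined with the identity $Q_\beta=(-\Delta)^{-\beta/2}\mathcal{L}_{2,n-2\beta}$ already recalled in the Introduction. The middle inclusion $Q_\beta\subseteq{\mathrm{BMO}}$ is the classical $Q$-space nesting. For ${\mathrm{BMO}}\subseteq(-\Delta)^{\beta/2}{\mathop\mathrm{Lip}}\,\beta$, I would equivalently prove that $(-\Delta)^{-\beta/2}$ maps ${\mathrm{BMO}}$ (modulo polynomials) into ${\mathop\mathrm{Lip}}\,\beta$, which is Strichartz's theorem \cite{Str1, Str2}. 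Finally, the equality $(-\Delta)^{\beta/2}{\mathop\mathrm{Lip}}\,\beta=\big((-\Delta)^{-\beta/2} H^{n/(n+\beta)}\big)^\ast$ follows by transferring the Fefferman-Stein duality $(H^{n/(n+\beta)})^\ast={\mathop\mathrm{Lip}}\,\beta$ through the isometry induced by $(-\Delta)^{-\beta/2}$: a bounded functional $\phi$ on $(-\Delta)^{-\beta/2}H^{n/(n+\beta)}$ yields $\tilde\phi(g):=\phi((-\Delta)^{-\beta/2}g)$ on $H^{n/(n+\beta)}$, whose Fefferman-Stein representer $\psi\in{\mathop\mathrm{Lip}}\,\beta$ exhibits $\phi$ as the pairing against $(-\Delta)^{\beta/2}\psi$.

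The main obstacle is the Riesz-potential mapping property ${\mathrm{BMO}}\to{\mathop\mathrm{Lip}}\,\beta$, since $(-\Delta)^{-\beta/2}f$ is only defined modulo polynomials when $f\in{\mathrm{BMO}}$. A standard but delicate argument splits $f$ into a local piece near the two test points (controlled by the $L^2$-oscillation BMO norm) and a far piece (handled by the kernel decay of order $|x-y|^{\beta-n-1}$ after subtracting an appropriate constant). Once Strichartz's theorem is invoked for this step, the remaining assertions in (iii) are routine.
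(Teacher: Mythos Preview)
Your treatment of (i), (ii), the inclusions $Q_\beta\subseteq{\mathrm{BMO}}\subseteq(-\Delta)^{\beta/2}{\mathop\mathrm{Lip}}\,\beta$, and the duality identification all match the paper's approach and are fine.

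The gap is in the first inclusion of (iii). You propose to obtain $(-\Delta)^{-1/2}\mathcal{L}_{2,n-2}\subseteq Q_\beta$ as ``the limiting case $\alpha\uparrow 1$'' of the nesting $Q_\alpha\subseteq Q_\beta$ together with $Q_\alpha=(-\Delta)^{-\alpha/2}\mathcal{L}_{2,n-2\alpha}$. This is not a proof: the identification of $Q_\alpha$ with $(-\Delta)^{-\alpha/2}\mathcal{L}_{2,n-2\alpha}$ is established only for $\alpha\in(0,1)$, so you cannot simply set $\alpha=1$; and the embedding constants in $Q_\alpha\subseteq Q_\beta$ (as proved in \cite{EJPX}) are not shown to be uniform as $\alpha\uparrow 1$, so a limiting argument does not pass through. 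Indeed, the paper flags this inclusion as a genuine improvement over the case $p=2$ of Adams' corollary, and supplies a direct argument rather than a limit.

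The paper's proof runs as follows. Given $f$ with $g:=(-\Delta)^{1/2}f\in\mathcal{L}_{2,n-2}$, set $F(x,t):=t^2\partial_t e^{-t\sqrt{-\Delta}}f(x)$ and observe via Fourier transform that $\widehat{F(\cdot,t)}(x)=-t^3|x|\hat g(x)e^{-t|x|}$. The proof of \cite[Lemma 2.1]{Xiao1} then gives the Carleson-type bound
\[
\sup_{(x_0,r)}r^{2\beta-n}\iint_{B(x_0,r)\times(0,r)}|F(x,t)|^2 t^{-1-2\beta}\,dt\,dx\lesssim\|g\|_{\mathcal{L}_{2,n-2}}^2,
\]
uniformly in $\beta\in(0,1)$. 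One now invokes the reproducing/projection operator $\Pi_{\psi_0}$ from \cite[Theorem 7.0(i)]{DX1}, which sends functions satisfying this Carleson condition into $Q_\beta$; a Fourier computation shows $\widehat{\Pi_{\psi_0}F}=\tfrac{\pi}{2}\widehat{(-\Delta)^{-1/2}g}$, whence $f=(-\Delta)^{-1/2}g\in Q_\beta$. You would need to supply this (or an equivalent direct) argument; the ``limit $\alpha\uparrow 1$'' shortcut does not work as stated.
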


\begin{proof} (i) This is a straightforward consequence of the definition, the Fourier transform and Lemma \ref{l21}(i).

(ii) This follows directly from a simple calculation with the norm.

(iii) Noticing (cf. \cite{EJPX, Xiao1, KXZZ}) that,
for $\beta\in (0,1)$, it holds true that
\begin{eqnarray}\label{32}
\|f\|_{Q_\beta}^2&\sim&\sup_{(x_0,r)\in\mathbb R^{n+1}_+}r^{2\beta-n}\iint_{B(x_0,r)\times B(x_0,r)}{|f(x)-f(y)|^2}{|x-y|^{-(n+2\beta)}}\,dx\,dy\\
&\sim&\sup_{(x_0,r)\in\mathbb R^{n+1}_+}r^{2\beta-n}\int_{B(x_0,r)}\int_0^r |\nabla_{x,t} e^{-t\sqrt{-\Delta}} f(x)|^2 t^{1-2\beta}\,dx\,dt\nonumber\\
&\sim& \|f\|_{(-\Delta)^{-{\beta}/{2}} \mathcal{L}_{2,n-2\beta}}^2\nonumber
\end{eqnarray}
and  (cf. \cite{FS, Str1, Str2, FJN})
\begin{eqnarray*}
\|f\|_{{\mathrm{BMO}}}^2&\sim&\sup_{(x_0,r)\in\mathbb R^{n+1}_+}r^{-2n}\iint_{B(x_0,r)\times B(x_0,r)}{|f(x)-f(y)|^2}\,dx\,dy\\
&\sim&\sup_{(x_0,r)\in\mathbb R^{n+1}_+}r^{-n}\int_{B(x_0,r)}\int_0^r |\nabla_{x,t} e^{-t\sqrt{-\Delta}} f(x)|^2 t\,dxdx,
\end{eqnarray*}
we readily show the second inclusion of (iii). Below is the argument for the
first one that improves the case $p=2$ of \cite[Corollary (i)]{Adams}.
Suppose $f\in (-\Delta)^{-\frac12}\mathcal{L}_{2,n-2}$ and let $g=(-\Delta)^{\frac12}f\in
\mathcal{L}_{2,n-2}$. Notice that the proof of \cite[Lemma 2.1]{Xiao1} actually reveals
\begin{align*}
&\sup_{(x_0,r)\in\mathbb R^{n+1}_+}r^{2\beta-n}\iint_{B(x_0,r)\times(0,r)}|F(x,t)|^2t^{-(1+2\beta)}\,dt\,dx\\
&\quad\quad\le
\sup_{(x_0,r)\in\mathbb R^{n+1}_+}r^{2-n}\iint_{B(x_0,r)\times(0,r)}|F(x,t)|^2\,t^{-3}{\,dt\,dx}\\
&\quad\quad\lesssim \|g\|^2_{\mathcal{L}_{2,n-2}},
\end{align*}
where
$$
\lf\{\begin{array}{rl}
F(x,t)&=t^2\partial_te^{-t\sqrt{-\Delta}} f(x);\\
\widehat{F(\cdot,t)}(x)&=-t^3|x|\hat{g}(x)\exp(-t|x|).
\end{array}\r.
$$
So, according to \cite[Theorem 7.0(i)]{DX1}, one has
$$
x\mapsto\Pi_{\psi_0}(x)=\iint_{\mathbb R^{n+1}_{+}}F(y,t)(\psi_0)_t(x-y)\,t^{-1}\,dt\,dy
$$
belongs to $Q_\beta$, where
$$
\lf\{\begin{array}{rl}
(\psi_0)_t(z)&=t^{-n}\psi_0(z/t);\\
\displaystyle\psi_0(z)&=(1+|z|^2)^{-\frac{n+3}2}\lf[1+|z|^2-(n+1)\Gamma\lf(\frac{n+1}{2}\r)\pi^{-\frac{n+1}{2}}\r].
\end{array}\r.
$$
Since
$$
\widehat{\Pi_{\psi_0}F}(x)=2^{-1}\pi\widehat{(-\Delta)^{-\frac{1}{2}}g}(x), \quad \forall\,x\in\rn,
$$
one concludes that $f=(-\Delta)^{-\frac12}g$ is in $Q_\beta$.

The third inclusion in (iii) follows from $(-\Delta)^{-\frac{\beta}{2}}{\mathrm{BMO}}\subseteq {\mathop\mathrm{Lip}}\,\beta$ (cf. \cite[Theorem 3.4]{Str1}). And, the result that $(-\Delta)^\frac{\beta}{2} {\mathop\mathrm{Lip}}\,\beta$ is equal to the dual space of $(-\Delta)^{-\frac{\beta}{2}} H^{\frac{n}{n+\beta}}$ follows from the well-known fact that the predual space of  ${\mathop\mathrm{Lip}}\,\beta$ is identified with the Hardy space $H^{\frac{n}{n+\beta}}$ (see e.g. \cite{BF}) and the usual dual pairing $\langle f,g\rangle=\langle \hat{f},\hat{g}\rangle$; see \cite[p. 203, (5)]{LLo} or \cite{N}.
\end{proof}

\subsection{The scaling invariant harmonic spaces $\mathcal{H}^{\alpha,2}$}\label{s32}
\hskip\parindent
We begin with introducing a new norm, which will be proved to be equivalent to $\|\cdot\|_{\mathcal{H}^{\alpha,2}}$.
For each harmonic functions $u$ on $\rr^{n+1}_+$, let
 $$
 \|u\|_{\alpha,2,\star}:=\left(\sup_{(x_0,r)\in\mathbb R^{n+1}_+}r^{-(2\alpha+n)}\int_{B(x_0,r)}\int_0^r|\nabla_{x,t}(-\Delta)^{-\alpha/2} u(x,t)|^2t\,dt\,dx\right)^\frac12.
$$

The following lemma corresponds to Lemma \ref{l31}.

\begin{lem}\label{l32} For $\alpha\in (-1,1)$, one has

{\rm(i)} If $u$ is a harmonic function on $\rr^{n+1}_+$ satisfying $ \|u\|_{\alpha,2,\star}<\infty$, then
$$
|\nabla_{x,t}(-\Delta)^{-\frac{\alpha}{2}}u(x,t)|\lesssim t^{\alpha-1}\|u\|_{\alpha,2,\star},\quad\forall\,(x,t)\in\mathbb R^{n+1}_+.
$$

{\rm(ii)} $\mathcal{H}^{\alpha,2}$ is scaling invariant, i.\,e.,
$$
u\in \mathcal{H}^{\alpha,2}\Longrightarrow\|u(\lambda\cdot,\lambda\cdot)\|_{{\mathcal{H}^{\alpha,2}}}=\|u\|_{\mathcal{H}^{\alpha,2}},\quad\forall\,\lambda>0.
$$

{\rm(iii)} If $\beta\in (0,1)$, then
$$
\mathcal{H}^{-1,2}\subseteq \mathcal{H}^{-\beta,2}\subseteq \mathcal{H}^{0,2}={\mathrm{HMO}}\subseteq \mathcal{H}^{\beta,2}.
$$
\end{lem}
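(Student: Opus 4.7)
The plan is to treat the three parts independently, since they are of quite different character: (i) reduces to Lemma \ref{l22}(i) via an auxiliary harmonic function, (ii) is a direct change of variables, and (iii) follows from the monotonicity of the weight $t^{1+2\alpha}$ on $(0,r)$.

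For (i), the natural strategy is to transfer to Lemma \ref{l22}(i). Set $v(x,t):=(-\Delta)^{-\alpha/2}u(x,t)$, where the fractional power acts only in the spatial variable through its Fourier symbol $(2\pi|\xi|)^{-\alpha}$. Being a Fourier multiplier in $x$, $(-\Delta)^{-\alpha/2}$ commutes with $\Delta_x+\partial_t^2$, so $v$ is harmonic on $\mathbb R^{n+1}_+$. The quantity $\|u\|_{\alpha,2,\star}$ is, by its very definition, equal to $\|v\|_{H^{\alpha,2}}$, so the hypothesis places $v$ in $H^{\alpha,2}$. Lemma \ref{l22}(i) applied to $v$ then gives the pointwise bound
$$
|\nabla_{x,t}(-\Delta)^{-\alpha/2}u(x,t)|=|\nabla_{x,t}v(x,t)|\lesssim t^{\alpha-1}\|v\|_{H^{\alpha,2}}=t^{\alpha-1}\|u\|_{\alpha,2,\star},
$$
as desired.

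For (ii), let $v(x,t):=u(\lambda x,\lambda t)$ for $\lambda>0$; then $v$ is again harmonic, and $\nabla_{x,t}v(x,t)=\lambda(\nabla_{y,s}u)(\lambda x,\lambda t)$. Substituting $y=\lambda x$, $s=\lambda t$ in the defining integral, the Jacobian $\lambda^{n+1}$ combines with the factor $\lambda^2$ from the gradient and $\lambda^{-(1+2\alpha)}$ from the weight $t^{1+2\alpha}$, producing an overall factor $\lambda^{-(2\alpha+n)}$ that matches the relabelling $r^{-(2\alpha+n)}=(\lambda r)^{-(2\alpha+n)}\lambda^{2\alpha+n}$ in the prefactor. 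After re-indexing the supremum via $(x_0,r)\mapsto(\lambda x_0,\lambda r)$, all powers of $\lambda$ cancel and one obtains $\|v\|_{\mathcal{H}^{\alpha,2}}=\|u\|_{\mathcal{H}^{\alpha,2}}$.

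For (iii), the equality $\mathcal{H}^{0,2}={\mathrm{HMO}}$ is immediate from the two definitions, since for $\alpha=0$ the weight $t^{1+2\alpha}$ reduces to $t$. For the chain of inclusions, the key observation is that, on $(0,r)$, for any $\alpha_1<\alpha_2$ in $(-1,1)$ one has
$$
t^{1+2\alpha_2}=t^{1+2\alpha_1}\,t^{2(\alpha_2-\alpha_1)}\le t^{1+2\alpha_1}\,r^{2(\alpha_2-\alpha_1)}.
$$
Inserting this into the definition of $\|u\|_{\mathcal{H}^{\alpha_2,2}}$ and absorbing $r^{2(\alpha_2-\alpha_1)}$ into the prefactor $r^{-(2\alpha_2+n)}$ to match $r^{-(2\alpha_1+n)}$ yields $\|u\|_{\mathcal{H}^{\alpha_2,2}}\le\|u\|_{\mathcal{H}^{\alpha_1,2}}$; applying this with $(\alpha_1,\alpha_2)=(-1,-\beta),(-\beta,0),(0,\beta)$ gives the asserted chain. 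The only delicate point I foresee lies in (i), namely ensuring that $(-\Delta)^{-\alpha/2}u$ is a well-defined harmonic function on $\mathbb R^{n+1}_+$ when $u$ need not decay at spatial infinity; however, the hypothesis $\|u\|_{\alpha,2,\star}<\infty$ built from $(-\Delta)^{-\alpha/2}u$ itself, interpreted in the Schwartz distribution framework modulo polynomials consistent with the Campanato setting of Lemma \ref{l31}, is expected to remove this ambiguity.
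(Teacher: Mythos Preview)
Your proposal is correct and follows essentially the same approach as the paper. In (i) you invoke Lemma~\ref{l22}(i) directly, whereas the paper re-runs the mean-value argument for subharmonic functions in place; moreover, the paper resolves the well-definedness concern you flag by writing $(-\Delta)^{-\alpha/2}u(x,t)$ via explicit integral formulas (a principal-value singular integral for $\alpha\in(-1,0)$ and a subordination integral $\int_0^\infty u(x,t+s)s^{\alpha-1}\,ds$ for $\alpha\in(0,1)$) rather than through the Fourier multiplier picture, which makes harmonicity in $(x,t)$ immediate without distributional caveats.
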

\begin{proof} (i) Notice that $u$ is infinitely differentiable and hence (see e.g. \cite{NY})
$$
(-\Delta)^{-\frac{\alpha}{2}}u(x,t)=
\begin{cases}
\displaystyle\frac{\alpha \Gamma(({n-\alpha})/{2})}{2^{\alpha+1}\pi^{n/2}\Gamma(1+\alpha/2)}
{\lim_{\epsilon\to 0}}\int_{\mathbb R^{n}\setminus B(0,\epsilon)}\frac{u(x,t)-u(x
+y,t)}{|y|^{n-\alpha}}\,dy\quad &\hbox{as}\quad  \alpha\in (-1,0);\\
u(x,t)\quad &\hbox{as}\quad \alpha=0;\\
\displaystyle\frac 1{2\pi\Gamma(\alpha)}\int_0^\infty u(x,t+s)\,s^{\alpha-1}ds\quad &\hbox{as}\quad \alpha\in (0,1),
\end{cases}
$$
and so $\Delta_{x,t}\big((-\Delta)^{-\frac{\alpha}{2}}u(x,t)\big)=0$, namely, $(-\Delta)^{-\frac{\alpha}{2}}u(x,t)$ is harmonic in $\mathbb R^{n+1}_+$.

As in the proof of  \cite[Lemma 1.1]{FJN}, an application of the mean-value inequalities for the subharmonic functions $|\partial_j (-\Delta)^{-\frac{\alpha}{2}}u(x,t)|^2$ and $|\partial_t (-\Delta)^{-\frac{\alpha}{2}}u(x,t)|^2$ on a given ball $B_{x_0,r}^{n+1}$ in $\mathbb R^{n+1}$ with centre $(x_0,r)$ and radius $r/2>0$ derives
$$
|\nabla_{x,t} (-\Delta)^{-\frac{\alpha}{2}}u(x,t)|^2\Big|_{(x,t)=(x_0,r)}\lesssim
r^{-n-1}\iint_{B_{x_0,r}^{n+1}}|\nabla_{x,t} (-\Delta)^{-\frac{\alpha}{2}}u(x,t)|^2\,dx\,dt\lesssim
r^{2(\alpha-1)}\|u\|^2_{\alpha,2,\star}.
$$

(ii) This follows from changing variables $(x,t)\to (\lambda x,\lambda t)$.

(iii)
This sequence of inclusions follows from the definition by noticing that
\begin{eqnarray*}
&\displaystyle\sup_{(x_0,r)\in\mathbb R^{n+1}_+}r^{-(2\alpha+n)}\int_{B(x_0,r)}
\int_0^r|\nabla_{x,t} u(x,t)|^2\,t^{1+2\alpha} \,dt\,dx\\
&\qquad \le \displaystyle\sup_{(x_0,r)\in\mathbb R^{n+1}_+}r^{-(2\beta+n)}\int_{B(x_0,r)}
\int_0^r|\nabla_{x,t} u(x,t)|^2\,t^{1+2\beta} \,dt\,dx
\end{eqnarray*}
holds true for all $-1\le\beta<\alpha<1$.
\end{proof}

\begin{lem}\label{l33}
Let $\alpha\in (-1,1)$. For all harmonic functions $u$ on $\rr^{n+1}_+$, it holds true that
 $$
\|u\|_{\mathcal{H}^{\alpha,2}}\sim\|u\|_{\alpha,2,\star}.
$$
\end{lem}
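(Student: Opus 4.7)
The plan is to reduce the norm equivalence to a one-dimensional weighted Hardy--Littlewood--Stein--Weiss type inequality in the $t$-variable, exploiting the fact that on harmonic functions in $\rr^{n+1}_+$, the operator $(-\Delta)^{\pm\alpha/2}$ coincides modulo constants with fractional integration or differentiation in $t$. From the integral representation recorded in the proof of Lemma \ref{l32}(i), for harmonic $u$ and $\alpha\in(0,1)$ one has
\begin{equation*}
\nabla_{x,t}(-\Delta)^{-\alpha/2}u(x,t)\;=\;c_\alpha\int_t^\infty\nabla_{x,t}u(x,\tau)(\tau-t)^{\alpha-1}\,d\tau;
\end{equation*}
for $\alpha\in(-1,0)$, the corresponding identity is obtained by combining the formula with the relation $(-\Delta)^{1/2}=-\partial_t$ (valid on a Poisson extension) to write $(-\Delta)^{-\alpha/2}=-(-\Delta)^{-(1+\alpha)/2}\partial_t$ and then applying the Riesz-potential representation of order $1+\alpha\in(0,1)$ to the harmonic function $\partial_t u$, which produces
\begin{equation*}
\nabla_{x,t}(-\Delta)^{-\alpha/2}u(x,t)\;=\;-c_{1+\alpha}\int_t^\infty\nabla_{x,t}\partial_\tau u(x,\tau)(\tau-t)^{\alpha}\,d\tau.
\end{equation*}
The principal analytic input will be a localized one-dimensional weighted fractional inequality of the form $\int_0^r|\int_t^{2r}\phi(\tau)(\tau-t)^{\beta-1}d\tau|^2 t^a\,dt\lesssim\int_0^{2r}|\phi(\tau)|^2\tau^{a+2\beta}d\tau$, obtained by dyadic decomposition in $\tau$.

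For the direction $\|u\|_{\alpha,2,\star}\lesssim\|u\|_{\mathcal{H}^{\alpha,2}}$ with $\alpha\in(0,1)$, I split the integral representation at $\tau=2r$ and apply the above fractional inequality with $\beta=\alpha$ and $a=1$. After integrating in $x\in B(x_0,r)$, the local part is dominated by $\int_{B(x_0,r)}\int_0^{2r}|\nabla u|^2\tau^{1+2\alpha}\,d\tau\,dx\leq(2r)^{n+2\alpha}\|u\|_{\mathcal{H}^{\alpha,2}}^2$ directly from the definition of the $\mathcal{H}^{\alpha,2}$-norm at scale $2r$. The tail (from $\tau\geq 2r$) is handled via the Bloch-type pointwise estimate $|\nabla u(x,\tau)|\lesssim\tau^{-1}\|u\|_{\mathcal{H}^{\alpha,2}}$, obtained by the same subharmonic mean-value argument as in the proof of Lemma \ref{l22}(i) but adapted to the weight $t^{1+2\alpha}$; the explicit integration $\int_{2r}^\infty\tau^{\alpha-2}\,d\tau\sim r^{\alpha-1}$ then yields a contribution of order $r^{n+2\alpha}\|u\|_{\mathcal{H}^{\alpha,2}}^2$. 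For $\alpha\in(-1,0)$ the argument is parallel, using the second representation above with $\beta=1+\alpha$ and $a=1$; the resulting weight $\tau^{3+2\alpha}$ on $|\nabla\partial_\tau u|^2$ is converted back to the $\mathcal{H}^{\alpha,2}$-form through the interior gradient estimate $|\nabla\partial_\tau u(x,\tau)|^2\lesssim\tau^{-(n+3)}\iint_{B((x,\tau),\tau/2)}|\nabla u|^2\,dy\,ds$ for the harmonic function $\nabla u$, followed by Fubini.

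For the reverse direction $\|u\|_{\mathcal{H}^{\alpha,2}}\lesssim\|u\|_{\alpha,2,\star}$, set $v:=(-\Delta)^{-\alpha/2}u$, which is harmonic by Lemma \ref{l32}(i) and satisfies $\|v\|_{H^{\alpha,2}}=\|u\|_{\alpha,2,\star}$. For $\alpha\in(-1,0)$, $u=(-\Delta)^{\alpha/2}v=c\int_t^\infty v(x,\tau)(\tau-t)^{-\alpha-1}\,d\tau$ (the Riesz potential of order $|\alpha|\in(0,1)$), so the fractional inequality with $\beta=-\alpha$ and $a=1+2\alpha$ produces directly the bound $\lesssim\int_{B(x_0,r)}\int_0^{2r}|\nabla v|^2\tau\,d\tau\,dx\leq(2r)^{n+2\alpha}\|v\|_{H^{\alpha,2}}^2=r^{n+2\alpha}\|u\|_{\alpha,2,\star}^2$, with the tail controlled by the Bloch-type bound $|\nabla v(x,\tau)|\lesssim\tau^{\alpha-1}\|v\|_{H^{\alpha,2}}$ of Lemma \ref{l22}(i). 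For $\alpha\in(0,1)$, $u=-(-\Delta)^{(\alpha-1)/2}\partial_t v=-c\int_t^\infty\partial_\tau v(x,\tau)(\tau-t)^{-\alpha}\,d\tau$; the fractional inequality with $\beta=1-\alpha$ and $a=1+2\alpha$ yields weight $\tau^3$ on $|\nabla\partial_\tau v|^2$, which is brought back to $\int|\nabla v|^2\tau\,d\tau\,dx\lesssim r^{n+2\alpha}\|u\|_{\alpha,2,\star}^2$ through the same interior-gradient-plus-Fubini argument, with the tail treated via the higher-derivative consequence $|\nabla\partial_\tau v(x,\tau)|\lesssim\tau^{\alpha-2}\|v\|_{H^{\alpha,2}}$.

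The principal technical obstacle is the uniform-in-$r$ bookkeeping for the local/tail decomposition together with the verification of the weighted fractional inequalities on the half-line for the specific exponents that arise (some of which lie near borderline cases for the classical Stein--Weiss theorem and must be established by direct decomposition in conjunction with the Bloch-type pointwise bounds). A second subtlety is the rigorous justification of differentiation under the integral sign in the representations of $(-\Delta)^{\pm\alpha/2}u$ for harmonic $u$ with only $\|u\|_{\mathcal{H}^{\alpha,2}}<\infty$ or $\|u\|_{\alpha,2,\star}<\infty$; this is handled by first applying all arguments to the regularization $u_k(x,t):=u(x,t+k^{-1})$ of Lemma \ref{l22}(iii), which enjoys enough decay at $t=0$ to validate the computations, and then passing to the limit $k\to\infty$ as in the proof of Theorem \ref{t21}.
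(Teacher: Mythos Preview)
Your proposal is correct and takes a genuinely different route from the paper. The paper's argument for the direction $\|u\|_{\mathcal{H}^{\alpha,2}}\lesssim\|u\|_{\alpha,2,\star}$ is \emph{not} a direct $t$-variable estimate: it first invokes Theorem~\ref{t21} to write $v:=(-\Delta)^{-\alpha/2}u=e^{-t\sqrt{-\Delta}}h$ with $h\in\mathcal{L}_{2,n+2\alpha}$, and then performs a spatial Calder\'on--Zygmund decomposition of $h-h_{2B}$ over $2B$ and annuli $U_j(B)$, using pointwise kernel bounds for $\nabla_{x,t}(t\sqrt{-\Delta})^\alpha e^{-t\sqrt{-\Delta}}$. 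For the reverse direction with $\alpha\in(-1,0]$, the paper again passes through Theorem~\ref{t21} (at $\alpha=0$) together with the $Q_{-\alpha}$ characterization; only for $\alpha\in(0,1)$ does the paper argue directly from the Riesz-potential formula and the Bloch bound $|\nabla u|\lesssim t^{-1}\|u\|_{\mathcal{H}^{\alpha,2}}$, and there it simply uses that bound on the whole range $s\in(0,\infty)$ rather than splitting local/tail. Your approach, by contrast, is a uniform and self-contained reduction to one-dimensional weighted Hardy--Minkowski inequalities in $t$, and it avoids any appeal to Theorem~\ref{t21}. This buys logical independence (Lemma~\ref{l33} no longer sits downstream of the trace theorem) and a single mechanism for all four cases; the paper's route, on the other hand, makes the link to the Campanato trace space explicit and is slightly shorter in the case $\alpha\in(0,1)$ of the direction $\|u\|_{\alpha,2,\star}\lesssim\|u\|_{\mathcal{H}^{\alpha,2}}$. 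One small remark: your concern about ``borderline'' Stein--Weiss exponents is unnecessary---with the substitution $\tau=ts$ and Minkowski, the constant is $\int_1^\infty(s-1)^{\beta-1}s^{-(a+2\beta+1)/2}\,ds$, finite precisely when $\beta>0$ and $a>-1$, and all four parameter pairs $(\beta,a)$ you use satisfy these strictly.
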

\begin{proof} Suppose that $u$ is a harmonic function on $\rr^{n+1}_+$.
Let us first show that $\|u\|_{\mathcal{H}^{\alpha,2}}\lesssim\|u\|_{\alpha,2,\star}$.

Notice that by the proof of Lemma \ref{l32}(i), $(-\Delta)^{-\frac{\alpha}{2}}u(x,t)$ is harmonic in $\mathbb R^{n+1}_+$.
Now let $v(x,t):=(-\Delta)^{-\frac{\alpha}{2}}u(x,t)$ for all $(x,t)\in \rr^{n+1}_+$. Then $\|v\|_{H^{\alpha,2}}=\|u\|_{\alpha,2,\star}<\infty.$
Hence, by Theorem \ref{t21}, there exists $h\in \mathcal{L}_{2,n+2\alpha}$ such that
$v(x,t)=e^{-t\sqrt{-\Delta}}h(x)$ and
\begin{equation}\label{31}
\|h\|_{\mathcal {L}_{2,n+2\alpha}}\lesssim \|v\|_{H^{\alpha,2}}\sim\|u\|_{\alpha,2,\star}<\infty.
\end{equation}
Notice that $u(x,t)=(-\Delta)^{\frac{\alpha}{2}}v(x,t)=(-\Delta)^{\frac{\alpha}{2}}e^{-t\sqrt{-\Delta}}h(x)$.

For simplicity, write for each $j\in \mathbb{N}$ that
$$
\lf\{\begin{array}{rl}
B & =B(x_0,r);\\
U_j(B)&=2^{j+1}B\setminus 2^{j}B=\{x\in\mathbb R^n: 2^j r\le |x-x_0|<2^{j+1}r\};\\
\chi_{U_j} &=\hbox{the\ characteristic\ function\ of}\ U_j.
\end{array}\r.
$$
Then one has
\begin{align*}
&r^{-(2\alpha+n)}
\int_{B(x_0,r)}\int_0^r |\nabla_{x,t} u(x,t)|^2\,{t^{1+2\alpha}\,dt\,dx}\\
&\quad\quad =r^{-(2\alpha+n)}
\int_{B}\int_0^r \lf|\nabla_{x,t}(t\sqrt{-\Delta})^\alpha e^{-t\sqrt{-\Delta}}(h-h_{2B})(x)\r|^2\,{t \,dt\,dx}\\
&\quad\quad =r^{-(2\alpha+n)}
\int_{B}\int_0^r \lf|\nabla_{x,t}(t\sqrt{-\Delta})^\alpha e^{-t\sqrt{-\Delta}}([h-h_{2B}]\chi_{2B})(x)\r|^2\,{t \,dt\,dx}\\
&\quad\qquad +\sum_{j=1}^\infty r^{-(2\alpha+n)}
\int_{B}\int_0^r \lf|\nabla_{x,t}(t\sqrt{-\Delta})^\alpha e^{-t\sqrt{-\Delta}}([h-h_{2B}]\chi_{U_j(B)})(x)\r|^2\,{t \,dt\,dx}\\
&\quad\quad := \mathrm{I}+\mathrm{II} \left(:=\sum_{j=1}^\infty\mathrm{II}_j\right).
\end{align*}

For the term $\mathrm{I}$, we use the standard $L^2$-boundedness to obtain
$$
\mathrm{I}\lesssim r^{-(2\alpha+n)}\int_\rn \big|\big((h-h_{2B})\chi_{2B}\big)(x)\big|^2\,{t \,dt\,dx}\lesssim \|h\|_{\mathcal{L}_{2,n+2\alpha}}^2.
$$

For the term $\mathrm{II}$, we notice the basic identity
$\nabla_{x,t} (t\sqrt{-\Delta})^\alpha e^{-t\sqrt{-\Delta}}(1)=0
$
and use the Fourier transform to derive that the integral kernel $\nabla_{x,t} K_\alpha(t,x,y)$ of
$(t\sqrt{-\Delta})^\alpha\nabla_{x,t} e^{-t\sqrt{-\Delta}}$ satisfies
$$
|\nabla_{x,t} K_\alpha(t,x,y)|\lesssim {t^{\alpha}}{({|x-y|}+{t})^{-(n+1+\alpha)}}, \quad \forall\, (x,y,t)\in\rr^n\times \rn\times (0,\infty).
$$
Consequently, an application of $h\in \mathcal{L}_{2,n+2\alpha}$ yields
\begin{align*}
\mathrm{II}
&\lesssim\sum_{j=1}^\infty r^{-(2\alpha+n)}\int_{B(x_0,r)}\int_0^r \Big|\int_{\rn}\nabla_{x,t}K_\alpha(t,x,y)\big((h-h_{2B})\chi_{U_j(B)}\big)(y)\,dy\Big|^2\,{t\,dt\,dx}\\
&\lesssim\sum_{j=1}^\infty r^{-(2\alpha+n)}\int_{B(x_0,r)}\int_0^r \Big|\int_{U_j(B)} {t^{\alpha}}{(t+{|x-y|})^{-(n+1+\alpha)}}\big((h-h_{2B})\chi_{U_j(B)}\big)(y)\,dy\Big|^2\,{t\,dt\,dx}\\
&\lesssim\sum_{j=1}^\infty r^{-(2\alpha+n)}\int_{B(x_0,r)}\int_0^r{t^{2\alpha}}{(2^jr)^{-2(n+1+\alpha)}}\Big|\int_{U_j(B)}\big((h-h_{2B})\chi_{U_j(B)}\big)(y)\,dy\Big|^2\,{t\,dt\,dx}\\
&\lesssim\sum_{j=1}^\infty {2^{-2j}}{(2^jr)^{-2(n+\alpha)}}\Big|\int_{U_j(B)}\big(h(y)-h_{2B}\big)\,dy\Big|^2\\
&\lesssim\sum_{j=1}^\infty {2^{-2j}}{(2^jr)^{-2(n+\alpha)}}\left[\int_{U_j(B)}\lf|h(y)-h_{2^{j+1}B}\r|\,dy+|U_j(B)|\sum_{i=1}^{j-1}|h_{2^iB}-h_{2^{i+1}B}|\right]^2\\
&\lesssim\sum_{j=1}^\infty {2^{-2j}}{(2^jr)^{-2(n+\alpha)}}\left[(2^jr)^{n+\alpha}+(2^jr)^n\sum_{i=1}^{j-1}(2^ir)^{\alpha}\right]^2\|h\|^2_{\mathcal{L}_{2,n+2\alpha}}.
\end{align*}

If $\alpha\in (-1,0)$, then
\begin{align*}
\mathrm{II}
&\lesssim\sum_{j=1}^\infty {2^{-2j}}{(2^jr)^{-2(n+\alpha)}}(2^jr)^{2n}r^{2\alpha}\|h\|^2_{\mathcal{L}_{2,n+2\alpha}}
\lesssim\sum_{j=1}^\infty 2^{-2j(1+\alpha)}\|h\|^2_{\mathcal{L}_{2,n+2\alpha}}\lesssim\|h\|^2_{\mathcal{L}_{2,n+2\alpha}}.
\end{align*}

If $\alpha=0$, then
\begin{align*}
\mathrm{II}
&\lesssim\sum_{j=1}^\infty {2^{-2j}}{(2^jr)^{-2n}}j^2(2^jr)^{2n}\|h\|^2_{\mathcal{L}_{2,n+2\alpha}}
\lesssim\sum_{j=1}^\infty 2^{-2j}j^2\|h\|^2_{\mathcal{L}_{2,n+2\alpha}}\lesssim\|h\|^2_{\mathcal{L}_{2,n+2\alpha}}.
\end{align*}

If $\alpha\in (0,1)$, then
\begin{align*}
\mathrm{II}
&\lesssim\sum_{j=1}^\infty {2^{-2j}}{(2^jr)^{-2(n+\alpha)}}(2^jr)^{2(n+\alpha)}\|h\|^2_{\mathcal{L}_{2,n+2\alpha}}
\lesssim\sum_{j=1}^\infty 2^{-2j}\|h\|^2_{\mathcal{L}_{2,n+2\alpha}}\lesssim\|h\|^2_{\mathcal{L}_{2,n+2\alpha}}.
\end{align*}

Putting the estimates of $\mathrm{I}$ and $\mathrm{II}$ together, 
we finally deduce the desired estimate from \eqref{31} that
$$
\|u\|_{\mathcal{H}^{\alpha,2}}\lesssim\|h\|_{\mathcal{L}_{2,n+2\alpha}}
\lesssim\|u\|_{\alpha,2,\star}.
$$

Conversely, let us show that
$\|u\|_{\alpha,2,\star}\lesssim \|u\|_{\mathcal{H}^{\alpha,2}}$. Two cases are considered as follows.

{\it Case 1}: $\alpha\in (-1,0]$. Under this condition, one has
$\|u\|_{H^{0,2}}\lesssim\|u\|_{\mathcal{H}^{\alpha,2}}<\infty$, i.\,e., $u\in H^{0,2}={\mathrm{HMO}}$, and so there is a function $f\in {\mathrm{BMO}}$ such that $u(x,t)=e^{-t\sqrt{-\Delta}}f(x)$ thanks to Theorem \ref{t21}.
By Lemma \ref{l31}(i) and \eqref{32}, this in turn implies that $f\in Q_{-\alpha}=(-\Delta)^{{\alpha}/{2}} \mathcal{L}_{2,n+2\alpha}$  and
$$\|u\|_{\alpha,2,\star}\lesssim \|f\|_{(-\Delta)^{{\alpha}/{2}} \mathcal{L}_{2,n+2\alpha}}\lesssim \|u\|_{\mathcal{H}^{\alpha,2}}.$$

{\it Case 2}: $\alpha\in (0,1)$. Under this condition, one uses the mean value property for sub-harmonic functions to deduce that, for
all $(x,t)\in \rr^{n+1}_+$,
\begin{equation*}
|\nabla_{x,t} u(x,t)|\lesssim \left(t^{-(2\alpha+n+2)}\int_{B(x_0,t)}\int_{2^{-1}t}^{2t}|\nabla_{x,s} u(x,s)|^2\,s^{1+2\alpha}\,ds\,dx\right)^\frac12 \lesssim t^{-1}\|u\|_{\mathcal{H}^{\alpha,2}}.
\end{equation*}
Notice that, since $u$ is harmonic, $u(x,t+s)=e^{-s\sqrt{-\Delta}}u(x,t)$ for $t,s>0$.
These, along with
$$(-\Delta)^{-\frac{\alpha}{2}}u(x,t)=\big[2\pi\Gamma(\alpha)\big]^{-1}\int_0^\infty u(x,t+s)\,s^{\alpha-1}ds,
$$
$t,s>0$ and $\alpha\in (0,1)$, derive
\begin{align*}
&\int_0^r\int_{B(x_0,r)}|\nabla_{x,t} (-\Delta)^{-\frac{\alpha}{2}}u(x,t)|^2 t\,dx\,dt\\
&\hspace{1cm}\lesssim\int_0^r\int_{B(x_0,r)}\left(\int_0^\infty \left|\nabla_{x,t}  u(x,t+s)\right|s^{\alpha-1}\,ds\right)^2 t\,dx\,dt\\
&\hspace{1cm}\lesssim
\|u\|^2_{\mathcal{H}^{\alpha,2}}\int_0^r\int_{B(x_0,r)}\left(\Big(\int_0^t +\int_t^\infty\Big) (t+s)^{-1}s^{\alpha-1}\,ds\right)^2 t\,dx\,dt\\
&\hspace{1cm}\lesssim\|u\|_{\mathcal{H}^{\alpha,2}}^2r^n\int_0^r\left(\Big(\int_0^t s^{\alpha-1}\,ds\Big)^2t^{-1}+\Big(\int_t^\infty s^{\alpha-2}\,ds\Big)^2 t\right)\,dt\\
&\hspace{1cm}\lesssim r^{n+2\alpha}\|u\|_{\mathcal{H}^{\alpha,2}}^2,
\end{align*}
thereby yielding $\|u\|_{\alpha,2,\star}\lesssim \|u\|_{\mathcal{H}^{\alpha,2}}$.
\end{proof}

We next prove Theorem \ref{t31}, which is the scaling invariant counterpart of Theorem \ref{t21}.

\begin{proof}[Proof of Theorem \ref{t31}] (i) Let us first show that
$$\mathcal{H}^{\alpha,2}\supseteq e^{-t\sqrt{-\Delta}}(-\Delta)^\frac{\alpha}{2}\mathcal{L}_{2,n+2\alpha}.$$
Given an $f\in (-\Delta)^\frac{\alpha}{2}\mathcal{L}_{2,n+2\alpha}$,
by the definition, we have  $(-\Delta)^{-\frac{\alpha}{2}}f\in \mathcal{L}_{2,n+2\alpha}$. From Lemma \ref{l33} and Theorem \ref{t21},
  it follows that
  $$\lf\|e^{-\sqrt{-\Delta}}f\r\|_{\mathcal{H}^{\alpha,2}} \sim\lf \|e^{-\sqrt{-\Delta}}f\r\|_{\alpha,2,\star}\sim \lf\|  e^{-\sqrt{-\Delta}}(-\Delta)^{-\frac{\alpha}{2}}f\r\|_{H^{\alpha,2}}\lesssim \|f\|_{(-\Delta)^\frac{\alpha}{2}\mathcal{L}_{2,n+2\alpha}},$$
  and hence $\mathcal{H}^{\alpha,2}\supseteq e^{-t\sqrt{-\Delta}}(-\Delta)^\frac{\alpha}{2}\mathcal{L}_{2,n+2\alpha}$.

Reversely, suppose
$$
u\in \mathcal{H}^{\alpha,2}\ \ \text{and}\ \ v(x,t):=(-\Delta)^{-\frac{\alpha}{2}}u(x,t),\quad \forall\,(x,t)\in \rr^{n+1}_+.
$$
The formula on $(-\Delta)^{-\frac{\alpha}{2}}u(x,t)$ presented in the proof of Lemma \ref{l32}(i) indicates that $v$ is harmonic in $\mathbb R^{n+1}_+$, and so $v$ belongs to $H^{\alpha,2}$. According to Theorem \ref{t21}, there is a function $g\in\mathcal{L}_{2,n+2\alpha}$ such that
$$v(x,t)=e^{-t\sqrt{-\Delta}} g(x).$$
Letting $f=(-\Delta)^\frac{\alpha}{2}g$, one obtains $u(x,t)=e^{-t\sqrt{-\Delta}} f(x)$ and $f\in(-\Delta)^\frac{\alpha}{2}\mathcal{L}_{2,n+2\alpha}$. So, one finds $u\in e^{-t\sqrt{-\Delta}}(-\Delta)^\frac{\alpha}{2}\mathcal{L}_{2,n+2\alpha}$, as desired.

(ii) Let $\alpha\in (0,1)$. Suppose $u\in \mathop\mathrm{HB}$, then, for each $(x_0,r)\in \rr^{n+1}_+$, it holds true that
\begin{eqnarray*}
&&r^{-(2\alpha+n)}\int_{B(x_0,r)}
\int_0^r|\nabla_{x,t} u(x,t)|^2\,t^{1+2\alpha} \,dt\,dx\\
&&\quad \le \lf[\sup_{(x,t)\in\mathbb R^{n+1}_+}t|\nabla_{x,t}u(x,t)| \r]^2
r^{-(2\alpha+n)}\int_{B(x_0,r)}
\int_0^r t^{2\alpha-1} \,dt\,dx\\
&&\quad\lesssim \|u\|_{\mathop\mathrm{HB}}^2,
\end{eqnarray*}
and hence $\mathop\mathrm{HB}\subseteq \mathcal{H}^{\alpha,2}$.

Conversely, let $u\in \mathcal{H}^{\alpha,2}$. Then, by the mean value property of subharmonic function, we conclude that,
for each $(x,t)\in \rr^{n+1}_+$,
\begin{eqnarray*}
t|\nabla_{x,t}u(x,t)| &&\lesssim  t^{-n}\int_{B(x_0,t/2)}
\int_{t/2}^{3t/2}|\nabla_{x,t} u(x,s)| \,ds\,dx\\
&&\lesssim\lf(\sup_{(x_0,r)\in\mathbb R^{n+1}_+}r^{-(2\alpha+n)}\int_{B(x_0,r)}
\int_0^r|\nabla_{x,t} u(x,s)|^2\,s^{1+2\alpha}\,ds\,dx\r)^{1/2}\\
&&\lesssim \|u\|_{\mathcal{H}^{\alpha,2}},
\end{eqnarray*}
which implies that $\mathcal{H}^{\alpha,2}\subseteq \mathop\mathrm{HB}$, and hence completes the proof of Theorem \ref{t31}.
\end{proof}

\begin{rem}\label{r31} \rm Now, the space $(-\Delta)^{\frac{\alpha}{2}}\mathcal{L}_{2,n+2\alpha}=(-\Delta)^{\frac{\alpha}{2}}{\mathop\mathrm{Lip}}\,\alpha$ with $\alpha\in (0,1)$ exists as the trace of the harmonic Bloch space $\mathop\mathrm{HB}$. From Lemma \ref{l32}(iii) and Theorem \ref{t31}(ii), we deduce the following inclusion chain:
$$
\mathcal{H}^{-1,2}\subseteq \mathcal{H}^{-\alpha,2}\subseteq {\mathrm{HMO}}\subseteq \mathcal{H}^{\alpha,2}=\mathop\mathrm{HB},\quad\forall\, \alpha\in (0,1).
$$
Also, according to \cite[Theorem 3.3]{Str1}, one has
\begin{align*}
f\in(-\Delta)^{-\frac{\alpha}{2}}{\mathrm{BMO}}&\Longleftrightarrow\sup_{(x_0,r)\in\mathbb R^{n+1}_+}r^{-n}\iint_{B(x_0,r)\times B(x_0,r)}|f(x)-f(y)|^2|x-y|^{-(n+2\alpha)}\,dx\,dy<\infty\\
&\Longrightarrow f\in \mathop\mathrm{Lip}\alpha.
\end{align*}

\end{rem}

\section{Connections to Caloric Functions and Navier-Stokes Equations}\label{s4}
\hskip\parindent
In this section, we outline the steps necessary to apply the techniques of Sections \ref{s2} and \ref{s3}
to establish the analogues for caloric functions, i.\,e., solutions to the heat equation, and naturally
handling a well/ill-posedness of the incompressible Navier-Stokes system on $\mathbb R^{3+1}_+$.

\begin{proof}[Proof of Theorem \ref{t41}] (i) Similar to Lemma \ref{l21},
in light of the argument for \cite[Lemma 2.1]{Xiao1} when $\alpha\in (-1,0]$ and \cite[p.19]{FJW} when $\alpha\in (0,1)$,
we find that
$$
f\in \mathcal{L}_{2,n+2\alpha}\Longleftrightarrow \sup_{(x_0,r)\in \mathbb R^{n+1}_+}r^{-(2\alpha+n)}\int_{B(x_0,r)}\int_0^{r^2}|\nabla e^{t\Delta}f(x)|^2\,dt\,dx<\infty,
$$
thereby obtaining $T^{\alpha,2}\supseteq e^{t\Delta}\mathcal{L}_{2,n+2\alpha}$.

To establish the reversed inclusion of this last one, we simply follow those three steps in the proof of Theorem \ref{t21}, but this time, with $k^{-1}$, $e^{-t\sqrt{-\Delta}}(x,y)$ and $\nabla_{x,t}$ being replaced by $k^{-2}$, $e^{t\Delta}(x,y)$ and $\nabla_{x,t}:=\nabla$ respectively, plus keeping $\int_{\mathbb R^n}\partial_j e^{t\Delta}(x,0)dx=0$ in mind, and so leave the details  to the interested reader.

(ii) Similar to Lemma \ref{l31}, one has the readily-verified equivalence below (see \cite{Xiao1, Xiao2} for $\alpha\in (-1,0]$; \cite[p. 19]{FJW} and Remark \ref{r31} for $\alpha\in (0,1)$):
\begin{align*}
\|f\|^2_{(-\Delta)^\frac{\alpha}{2}\mathcal{L}_{2,n+2\alpha}}&\sim\sup_{(x_0,r)\in \mathbb R^{n+1}_+}r^{-(2\alpha+n)}\int_{B(x_0,r)}\int_0^{r^2}|\nabla e^{t\Delta}f(x)|^2t^\alpha\,dt\,dx\\
&\sim \sup_{(x_0,r)\in \mathbb R^{n+1}_+}r^{-(2\alpha+n)}\int_{B(x_0,r)}\int_0^{r^2}|\nabla (-\Delta)^{-\frac{\alpha}{2}}e^{t\Delta}f(x)|^2\,dt\,dx.
\end{align*}

Moreover, by an argument similar to that used in the proof of Lemma \ref{l33}, we conclude that, for all caloric functions $u$
on $\rr^{n+1}_+$, it holds true that
$$
\|u\|_{\mathcal{T}^{\alpha,2}}\sim\|u\|_{\alpha,2,\dagger}:=\left(\sup_{(x_0,r)\in\mathbb R^{n+1}_+}r^{-(2\alpha+n)}\int_{B(x_0,r)}\int_0^r|\nabla_{x,t}(-\Delta)^{-\alpha/2} u(x,t)|^2\,t\,dt\,dx\right)^\frac12.
$$

Thus, combining the above two facts, one sees that, for $f\in (-\Delta)^\frac{\alpha}{2}\mathcal{L}_{2,n+2\alpha}$, it holds true that
\begin{align*}
\|e^{-t\Delta}f\|_{\mathcal{T}^{\alpha,2}}\sim\|e^{-t\Delta}f\|_{\alpha,2,\dagger}
\sim\|f\|_{(-\Delta)^\frac{\alpha}{2}\mathcal{L}_{2,n+2\alpha}},
\end{align*}
which implies that $e^{t\Delta}(-\Delta)^\frac{\alpha}{2}\mathcal{L}_{2,n+2\alpha}\subseteq \mathcal{T}^{\alpha,2}$.

On the other hand, suppose
$$
u\in \mathcal{T}^{\alpha,2}\ \ \text{and}\ \ w(x,t):=(-\Delta)^{-\frac{\alpha}{2}}u(x,t), \quad \forall (x,t)\in \rr^{n+1}_+.
$$
The formula on $(-\Delta)^{-\frac{\alpha}{2}}u(x,t)$ presented in the proof of Lemma \ref{l32}(i) illustrates that $w$ is caloric in $\mathbb R^{n+1}_+$, and so $w$ belongs to $T^{\alpha,2}$. According to (i) as above, there exists a function $g\in\mathcal{L}_{2,n+2\alpha}$ such that $w(x,t)=e^{t\Delta}g(x)$. Putting $f=(-\Delta)^\frac{\alpha}{2}g$, one has $u(x,t)=e^{t\Delta} f(x)$ and $f\in(-\Delta)^\frac{\alpha}{2}\mathcal{L}_{2,n+2\alpha}$. So, one finds $u\in e^{t\Delta}(-\Delta)^\frac{\alpha}{2}\mathcal{L}_{2,n+2\alpha}$, whence reaching $e^{t\Delta}(-\Delta)^\frac{\alpha}{2}\mathcal{L}_{2,n+2\alpha}\supseteq \mathcal{T}^{\alpha,2}$. Therefore, the identity in (ii) holds true.

(iii) Given $\beta\in (0,1)$, in a way similar to that used in the proof of \cite[(1.1)]{FN}, one employs Theorem \ref{t41}(ii) to deduce
$$
u\in \mathcal{T}^{\beta,2}\Longrightarrow
|\nabla u(x,t)|\lesssim t^{-\frac12}\|u\|_{\beta,2,\dagger}\lesssim t^{-\frac12}\|u\|_{\mathcal{T}^{\beta,2}},\ \ \forall\,(x,t)\in\mathbb R^{n+1}_+.
$$
Thus, $\mathcal{T}^{\beta,2}\subseteq \mathop\mathrm{CB}$. Conversely, if $u\in \mathop\mathrm{CB}$, then
$$
\int_0^{r^2}\int_{B(x_0,r)}|\nabla u(x,t)|^2t^\beta\,dx\,dt\lesssim \|u\|^2_{\mathop\mathrm{CB}} \int_0^{r^2}\int_{B(x_0,r)}t^{\beta-1} \,dx\,dt
\lesssim r^{n+2\beta}\|u\|^2_{\mathop\mathrm{CB}},
$$
and hence $u\in \mathcal{T}^{\beta,2}$ follows from Theorem \ref{t41}(ii). Therefore, $\mathcal{T}^{\beta,2}=\mathop\mathrm{CB}$.
\end{proof}

\begin{rem}\rm
\label{r41} Naturally, one has the following inclusion chain:
$$
\mathcal{T}^{-1,2}\subseteq \mathcal{T}^{-\alpha,2}\subseteq {\mathrm{TMO}}\subseteq \mathcal{T}^{\alpha,2}=\mathop\mathrm{CB},\quad\forall\,\alpha\in (0,1).
$$
\end{rem}

Upon recalling the invariance of the Navier-Stokes system (3D-N-S) under the scaling transform
$$
\left\{\begin{array}{rl}
{\bf u}(x,t)\mapsto {\bf u}_\lambda(x,t):=&\lambda {\bf u}(\lambda x,\lambda^2t);\\
{\bf a}(x)\mapsto {\bf a}_\lambda(x):=&\lambda {\bf a}(\lambda x);\\
p(x,t)\mapsto p_\lambda(x,t):=&\lambda^2p(\lambda x,\lambda^2t),\\
\end{array}
\right.
$$
that is to say, if $({\bf u}, {\bf a}, p)$ obeys the Navier-Stokes system (3D-N-S), then
$({\bf u}_\lambda, {\bf a}_\lambda, p_\lambda)$ also obeys the Navier-Stokes system (3D-N-S) for any $\lambda>0$, we can adapt the methods described in Theorem \ref{t41}(ii) and its proof to treat the well/ill-posedness of the Cauchy problem for the incompressible Navier-Stokes system in $\mathbb R^{3+1}_+$; see e.g. \cite{KT, Xiao1, Xiao2, LL, CW, JS, M-S, GP, BP,Tri, Y} for more information.

Let $\dot{B}^{-1,\infty}_\infty\equiv \dot{B}^{-1,\infty}_\infty(\rn)$ be 
the Besov space of all measurable functions $f$ on $\mathbb R^n$ satisfying
$$
\|f\|_{\dot{B}^{-1,\infty}_{\infty}}:=\sup_{(x,t)\in\mathbb R^{n+1}_+}\sqrt{t}|e^{t\Delta}f(x)|<\infty.
$$
\begin{thm}\label{t42}

{\rm(i)} If $\alpha\in (-1,0]$, then, for each $T\in (0,\infty]$,
$\big((-\Delta)^\frac{\alpha}{2}\mathcal{L}_{2,2\alpha+n}\big)_T^{-1}
=Q_{-\alpha,T}^{-1}.
$

{\rm(ii)} If $\alpha\in (0,1)$, then 
$
\big((-\Delta)^\frac{\alpha}{2}\mathcal{L}_{2,2\alpha+n}\big)_\infty^{-1}=\dot{B}^{-1,\infty}_{\infty}.
$
\end{thm}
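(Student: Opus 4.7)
Part (i) reduces to a matching of definitions. Earlier in the paper it is recorded (via \cite{EJPX, Xiao1, Xiao2, WX}) that for $\alpha\in(-1,0]$ one has $(-\Delta)^{\alpha/2}\mathcal{L}_{2,n+2\alpha}=Q_{-\alpha}$, with $Q_0={\mathrm{BMO}}$. Writing $\beta:=-\alpha\in[0,1)$, the Koch--Tataru/Xiao space $Q^{-1}_{\beta,T}$ from \cite{KT, Xiao1, Xiao2} is defined precisely by
$$\|f\|^2_{Q^{-1}_{\beta,T}}:=\sup_{(x_0,r)\in\mathbb R^n\times(0,T)} r^{2\beta-n}\int_0^{r^2}\int_{B(x_0,r)} |e^{t\Delta}f(y)|^2 t^{-\beta}\,dy\,dt,$$
which, under the substitution $\alpha=-\beta$, coincides with $\|f\|^2_{n+2\alpha,-1,T}$. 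Hence (i) holds with identical norms.

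For part (ii) I would prove the two inclusions separately.

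\emph{Inclusion $\dot B^{-1,\infty}_\infty\subseteq \big((-\Delta)^{\alpha/2}\mathcal{L}_{2,n+2\alpha}\big)_\infty^{-1}$.} The pointwise estimate $|e^{t\Delta}f(y)|\le t^{-1/2}\|f\|_{\dot B^{-1,\infty}_\infty}$ holds everywhere. Because $\alpha\in(0,1)$, the function $t^{\alpha-1}$ is integrable on $(0,r^2)$, so
$$\int_0^{r^2}\int_{B(x_0,r)}|e^{t\Delta}f(y)|^2 t^\alpha\,dy\,dt \le \|f\|^2_{\dot B^{-1,\infty}_\infty}\,|B(x_0,r)|\int_0^{r^2}t^{\alpha-1}\,dt\sim r^{n+2\alpha}\|f\|^2_{\dot B^{-1,\infty}_\infty},$$
which gives $\|f\|_{n+2\alpha,-1,\infty}\lesssim\|f\|_{\dot B^{-1,\infty}_\infty}$.

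\emph{Inclusion $\big((-\Delta)^{\alpha/2}\mathcal{L}_{2,n+2\alpha}\big)_\infty^{-1}\subseteq \dot B^{-1,\infty}_\infty$.} Fix $(x_0,t_0)\in\mathbb R^{n+1}_+$ and set $u(x,t):=e^{t\Delta}f(x)$. Since $u$ is caloric, the identity $(\partial_t-\Delta)|u|^2=-2|\nabla u|^2\le 0$ makes $|u|^2$ sub-caloric. The standard parabolic mean value inequality on the backward cylinder $B(x_0,\sqrt{t_0}/2)\times(3t_0/4,t_0)$ therefore yields
$$|u(x_0,t_0)|^2 \lesssim \frac{1}{t_0^{n/2+1}}\int_{3t_0/4}^{t_0}\int_{B(x_0,\sqrt{t_0}/2)}|u(y,t)|^2\,dy\,dt.$$
On this region $t\sim t_0$, so $t^\alpha\sim t_0^\alpha$, and enlarging the domain to $(0,t_0)\times B(x_0,\sqrt{t_0})$ with $r:=\sqrt{t_0}$ we get
$$|u(x_0,t_0)|^2 \lesssim \frac{1}{t_0^{n/2+1+\alpha}}\int_0^{r^2}\int_{B(x_0,r)}|u(y,t)|^2 t^\alpha\,dy\,dt \lesssim \frac{r^{n+2\alpha}}{t_0^{n/2+1+\alpha}}\|f\|^2_{n+2\alpha,-1,\infty} = t_0^{-1}\|f\|^2_{n+2\alpha,-1,\infty}.$$
Taking $\sup$ over $(x_0,t_0)$ gives $\|f\|_{\dot B^{-1,\infty}_\infty}\lesssim\|f\|_{n+2\alpha,-1,\infty}$. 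Note that $T=\infty$ is essential: the radius $r=\sqrt{t_0}$ must be admissible for all $t_0>0$.

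\emph{Main obstacle.} The chief analytic input is the sub-caloric mean value inequality used in the $(\subseteq)$ direction of (ii); once it is invoked, the remaining estimates reduce to Fubini and elementary size bounds. A minor point to double-check is that the norm identification in (i) really matches the conventions of \cite{KT, Xiao1, Xiao2} under the substitution $\alpha\leftrightarrow -\beta$.
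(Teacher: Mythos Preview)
Your treatment of (i) and of the inclusion $\dot B^{-1,\infty}_\infty\subseteq\big((-\Delta)^{\alpha/2}\mathcal L_{2,n+2\alpha}\big)_\infty^{-1}$ in (ii) matches the paper's proof exactly.

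For the reverse inclusion in (ii) you take a genuinely different route. The paper does not argue pointwise at all: it simply observes that $\|\cdot\|_{n+2\alpha,-1,\infty}$ is invariant under $f\mapsto\lambda f(\lambda\cdot-x_0)$ and then invokes the known fact (Cannone \cite[Proposition 7]{Can}) that $\dot B^{-1,\infty}_\infty$ is the \emph{maximal} space with this scaling, which forces the inclusion. Your argument is instead a direct sub-caloric mean value estimate on $|e^{t\Delta}f|^2$, localized to a backward parabolic cylinder of scale $\sqrt{t_0}$ and then re-expressed in terms of the Carleson-type quantity defining $\|f\|_{n+2\alpha,-1,\infty}$. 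Both are correct. Your approach is more self-contained and makes explicit why $T=\infty$ is needed (the admissible radius $r=\sqrt{t_0}$ must be unrestricted); the paper's approach is a one-line appeal to an external structural result and would generalize immediately to any norm with the same scaling behaviour. The analytic content you flag as the ``main obstacle'' --- the parabolic mean value inequality for nonnegative subsolutions --- is standard, so your route is arguably more elementary overall.
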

\begin{proof} (i) If $\alpha=0$, then by the definition, one has
$(-\Delta)^\frac{\alpha}{2}\mathcal{L}_{2,2\alpha+n}=\mathrm{BMO}=Q_{0}$,
and the conclusion is identical.

If $\alpha\in (-1,0)$, then it follows, from \cite{WX,Xiao1}, that
$(-\Delta)^\frac{\alpha}{2}\mathcal{L}_{2,2\alpha+n}=Q_{-\alpha}$ and hence, for each $T\in (0,T]$, one has
$$\big((-\Delta)^\frac{\alpha}{2}\mathcal{L}_{2,2\alpha+n}\big)_T^{-1}
=Q_{-\alpha,T}^{-1}.$$

(ii) Let $\alpha\in (0,1)$. Notice that, if $f\in \dot{B}^{-1,\infty}_\infty$, then we have
$$
\|f\|_{n+2\alpha,-1,\infty}\lesssim \|f\|_{\dot{B}^{-1,\infty}_{\infty}}\left[\sup_{(x_0,r)\in\mathbb R^{n+1}_+}r^{-(2\alpha+n)}{\int_0^{r^2}\lf(\int_{B(x_0,r)}t^{-1}\,dy\r)\,t^{\alpha}dt}\right]^\frac12\lesssim\|f\|_{\dot{B}^{-1,\infty}_{\infty}}.
$$
Thus, $\dot{B}^{-1,\infty}_{\infty}$ is contained in $ \big((-\Delta)^\frac{\alpha}{2}\mathcal{L}_{2,n+2\alpha}\big)_\infty^{-1}$. 

Conversely, since $\|\cdot\|_{n+2\alpha,-1,\infty}$ enjoys the following scale invariance
$$
\|\lambda f(\lambda\cdot-x_0)\|_{n+2\alpha,-1,\infty}=\|f\|_{n+2\alpha,-1,\infty},\quad\forall\,(x_0,\lambda)\in\mathbb R^{n+1}_+,
$$
the space $\big((-\Delta)^\frac{\alpha}{2}\mathcal{L}_{2,n+2\alpha}\big)_\infty^{-1}$ must be contained in $\dot{B}^{-1,\infty}_{\infty}$ which is the maximal space in the sense of the above scale invariance; see e.g. \cite[Proposition 7]{Can}.
The proof of Theorem \ref{t42} is then completed.
\end{proof}

The following corollary follows from combining Theorem \ref{t42} and 
the known conclusions from \cite{BP,KT,Xiao1,Xiao2}.

\begin{cor}\label{c41} For $\alpha\in (-1,1)$, one has

{\rm(i)} The well-posedness under $\alpha\in (-1,0]$, precisely, for any $T\in (0,\infty]$, there is $\delta>0$ such that the Navier-Stokes system (3D-N-S) has a unique mild solution ${\bf u}$ with $\|{\bf u}\|_{X_{\alpha,T}}<\infty$ for all initial data ${\bf a}$ with $\|{\bf a}\|_{3+2\alpha,-1,T}\le\delta$.

{\rm(ii)} The ill-posedness under $\alpha\in (0,1)$, precisely, for any $\epsilon>0$, there exists an initial data ${\bf a}$ with $\|{\bf a}\|_{3+2\alpha,-1,\infty}\le\epsilon$ such that the corresponding mild solution ${\bf u}$ to the Navier-Stokes system (3D-N-S) obeys
$\|{\bf u}\|_{X_{\alpha,T_0}}\ge\epsilon^{-1}$ for some $T_0\in (0,\epsilon)$.
\end{cor}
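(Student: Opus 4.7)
The strategy is to reduce both parts to previously established results by way of the identifications in Theorem \ref{t42}, so the proof is essentially a dictionary argument rather than new analysis.

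For part (i), assume $\alpha\in(-1,0]$. I would first apply Theorem \ref{t42}(i) to identify $\big((-\Delta)^{\alpha/2}\mathcal{L}_{2,3+2\alpha}\big)_T^{-1}$ with $Q_{-\alpha,T}^{-1}$ (where $Q_0^{-1}=\mathrm{BMO}^{-1}$), so that smallness of $\|{\bf a}\|_{3+2\alpha,-1,T}$ is equivalent, up to an absolute constant, to smallness in the standard Koch--Tataru-type space. Next I would set up a Picard iteration in $X_{3+2\alpha,T}$ by verifying two ingredients: (a) boundedness of the linear map ${\bf a}\mapsto e^{t\Delta}{\bf a}$ from $\big((-\Delta)^{\alpha/2}\mathcal{L}_{2,3+2\alpha}\big)_T^{-1}$ into $X_{3+2\alpha,T}$, which is immediate from the respective definitions, and (b) boundedness of the bilinear operator
$$B({\bf u},{\bf v})(x,t):=\int_0^t e^{(t-s)\Delta}P\nabla\cdot({\bf u}\otimes{\bf v})(s)\,ds$$
on $X_{3+2\alpha,T}\times X_{3+2\alpha,T}$. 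For $\alpha=0$ the estimate in (b) is exactly the Koch--Tataru theorem \cite{KT}, and for $\alpha\in(-1,0)$ it is Xiao's generalization in \cite{Xiao1,Xiao2}. Combining (a) and (b) with the contraction mapping principle then produces a unique mild solution for all data with $\|{\bf a}\|_{3+2\alpha,-1,T}\le \delta$, where the threshold $\delta$ is determined by the operator norm of $B$.

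For part (ii), let $\alpha\in(0,1)$. By Theorem \ref{t42}(ii) we have $\big((-\Delta)^{\alpha/2}\mathcal{L}_{2,3+2\alpha}\big)_\infty^{-1}=\dot{B}^{-1,\infty}_\infty$ with equivalent norms. The norm-inflation construction of Bourgain and Pavlovi\'c \cite{BP} produces, for any $\epsilon>0$, a divergence-free ${\bf a}\in\dot{B}^{-1,\infty}_\infty$ with $\|{\bf a}\|_{\dot{B}^{-1,\infty}_\infty}\le\epsilon$ whose corresponding mild solution ${\bf u}$ satisfies $\sqrt{T_0}\,\|{\bf u}(\cdot,T_0)\|_{L^\infty}\ge\epsilon^{-1}$ for some $T_0\in(0,\epsilon)$. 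Since the $X_{3+2\alpha,T_0}$-norm by definition dominates $\sup_{t\in(0,T_0]}\sqrt{t}\,|{\bf u}(x,t)|$, this immediately gives $\|{\bf u}\|_{X_{3+2\alpha,T_0}}\ge\epsilon^{-1}$ (after at most absorbing a universal constant into the smallness parameter). Transporting this example through the identification from Theorem \ref{t42}(ii) provides the counterexample demanded in (ii).

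The main, and essentially only, delicate point is verifying that the scaling-invariant solution space $X_{3+2\alpha,T}$ defined in this paper coincides with, or is norm-equivalent to, the Carleson-type solution spaces implicit in \cite{KT,Xiao1,Xiao2} on the well-posed side and in \cite{BP} on the ill-posed side. Concretely, one must match the quadratic part $r^{-(3+2\alpha)}\int_0^{r^2}\!\!\int_{B(x_0,r)}|{\bf u}(y,t)|^2 t^\alpha\,dy\,dt$ of $\|{\bf u}\|_{X_{3+2\alpha,T}}$ against the solution-space norms in those references, using the characterizations already furnished by Theorem \ref{t42}. Once this bookkeeping is done, the contraction-mapping argument in (i) and the norm-inflation construction in (ii) transfer verbatim, completing the proof.
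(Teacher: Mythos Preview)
Your proposal is correct and follows essentially the same route as the paper: both parts are reduced, via the identifications in Theorem~\ref{t42}, to the known results in \cite{KT,Xiao1,Xiao2} for (i) and \cite{BP} for (ii). One small difference worth noting: in (ii) the paper makes the norm equivalence fully explicit by computing, for $\alpha\in(0,1)$, that
\[
\sup_{(x,t)\in\mathbb R^3\times(0,T_0)}\sqrt{t}\,|{\bf u}(x,t)|\;\lesssim\;\|{\bf u}\|_{X_{\alpha,T_0}}\;\lesssim\;\sup_{(x,t)\in\mathbb R^3\times(0,T_0)}\sqrt{t}\,|{\bf u}(x,t)|,
\]
the right-hand bound coming from $r^{-2\alpha}\int_0^{r^2}t^{\alpha-1}\,dt\lesssim 1$; this two-sided equivalence is what lets the Bourgain--Pavlovi\'c construction transfer cleanly to the $X_{3+2\alpha,T_0}$-norm, and it is precisely the ``bookkeeping'' you flagged as the delicate point.
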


\begin{proof} (i) Thanks to Theorem \ref{t42}(i),
$$
\big((-\Delta)^\frac{\alpha}{2}\mathcal{L}_{2,2\alpha+n}\big)_T^{-1}
=Q_{-\alpha,T}^{-1},\quad\forall\,\alpha\in (-1,0],
$$
the desired well-posedness follows from \cite{KT} and \cite[Theorem 2.5]{Xiao2} (cf. \cite{Xiao1}).

(ii) According to \cite[Theorem 1.1]{BP} and due to the basic estimate below
\begin{align*}
\sup_{(x,t)\in\mathbb R^{3}\times (0,T_0)}\sqrt{t}|{\bf u}(x,t)|
&\lesssim\|{\bf u}\|_{X_{\alpha,T_0}}\\
&\lesssim \sup_{(x,t)\in\mathbb R^3\times(0,T_0)}\sqrt{t}|{\bf u}(x,t)|\left(1+\sup_{(x_0,r)\in\mathbb R^{3}\times(0,T_0)}\lf[r^{-2\alpha}{\int_0^{r^2}\,t^{\alpha-1}dt}\r)^\frac12\right]\\
&\lesssim \sup_{(x,t)\in\mathbb R^{3}\times(0,T_0)}\sqrt{t}|{\bf u}(x,t)|,\quad\forall\,\alpha\in (0,1),
\end{align*}
to show Theorem \ref{t42}(ii), it suffices to prove
$$
\big((-\Delta)^\frac{\alpha}{2}\mathcal{L}_{2,2\alpha+n}\big)_\infty^{-1}=\dot{B}^{-1,\infty}_{\infty},\quad\forall\,\alpha\in (0,1),
$$
which is just Theorem \ref{t42}(ii). The proof is then completed.
\end{proof}

\begin{rem}\rm
\label{r42} The arguments for Theorem \ref{t42} and \cite[Theorem 1.2 (iii)]{Xiao1} or \cite[Theorem 2.1]{Xiao2} actually reveal
$$
\big((-\Delta)^\frac{\alpha}{2}\mathcal{L}_{2,n+2\alpha}\big)_\infty^{-1}
=\nabla\cdot\big((-\Delta)^\frac{\alpha}{2}\mathcal{L}_{2,n+2\alpha}\big)^n,
\quad\forall\,\alpha\in (-1,1).
$$
This, along with the proof of Theorem \ref{t42}, especially implies that
$$
\big((-\Delta)^{-\frac{\alpha}{2}}\mathcal{L}_{2,n-2\alpha}\big)^{-1}_\infty=Q_{\alpha,\infty}^{-1}\subseteq {\mathrm{BMO}}^{-1}\subseteq \dot{B}^{-1,\infty}_\infty=\big((-\Delta)^\frac{\alpha}{2}\mathcal{L}_{2,n+2\alpha}\big)_\infty^{-1},\quad\forall\,\alpha\in (0,1).
$$
Such a series of function space embeddings, plus the invariance of the Navier-Stokes system (3D-N-S), nicely explains that the phenomenon happened in Theorem \ref{t42} is really natural and hence important.
\end{rem}

\noindent Renjin Jiang$^{1}$, Jie Xiao$^{2}$ and Dachun Yang$^{1}$

\

\noindent
1. School of Mathematical Sciences, Beijing Normal University,
Laboratory of Mathematics and Complex Systems, Beijing 100875, People's Republic of China

\

\noindent
2. Department of Mathematics and Statistics, Memorial University, St. John's, NL A1C 5S7, Canada

\

\noindent{\it E-mail addresses}:
\texttt{rejiang@bnu.edu.cn}

\hspace{2.3cm}
\texttt{jxiao@mun.ca}

\hspace{2.3cm}
\texttt{dcyang@bnu.edu.cn}


\begin{thebibliography}{23}

\bibitem{Adams} D. R. Adams, A note on Riesz potentials, {\it Duke Math. J.} 42 (1975), 765-778.

\vspace{-0.3cm}
\bibitem{BF} A. Bonami and J. Feuto, Products of functions in Hardy and Lipschitz or ${\mathrm{BMO}}$ spaces,
in: \textit{Recent developments in real and harmonic analysis}, 57-71, Appl. Numer. Harmon. Anal., \textit{Birkh\"auser Boston, Inc., Boston, MA}, 2010.

\vspace{-0.3cm}
\bibitem{BP} J. Bourgain and N. Pavlovi\'c, Ill-posedness of the Navier-Stokes equations in a critical space in 3D, {\it J. Funct. Anal.} 255 (2008), 2233-2247.

\vspace{-0.3cm}
\bibitem{Cam1} S. Campanato, Propriet\'a di h\"olderianit\'a di alcune classi di funzioni, {\it Ann. Scuola Norm. Sup. Pisa} (3) 17 (1963), 175-188.

\vspace{-0.3cm}
\bibitem{Cam2} S. Campanato, Propriet\'a di una famiglia dispazi funzionali, {\it Ann. Scuola Norm. Sup. Pisa} (3) 18 (1964), 137-160.

\vspace{-0.3cm}
\bibitem{Can} M. Cannone, {Harmonic analysis tools for solving the incompressible Navier-Stokes equations}, in: {\it Handbook of Mathematical Fluid Dynamics}, vol. III, North-Holland, Amsterdam, 2004, 161-244.

\vspace{-0.3cm}
\bibitem{CW} M. Cannone, Global well-posedness for Navier-Stokes equations in critical Fourier-Herz spaces, \textit{Nonlinear Anal.} 75 (2012), 3754-3760.

\vspace{-0.3cm}
\bibitem{DX1} G. Dafni and J. Xiao, Some new tent spaces and duality theorems for fractional Carleson measures and $Q_\alpha(\rn)$,
{\it J. Funct. Anal.} 208 (2004), 377-422.

\vspace{-0.3cm}
\bibitem{DXY} X. T. Duong, J. Xiao and L. Yan, Old and new Morrey
spaces with heat kernel bounds, {\it J. Fourier Anal. Appl.}
13 (2007), 87-111.

\vspace{-0.3cm}
\bibitem{DYZ} X. T. Duong, L. Yan and C. Zhang, On characterization of Poisson integrals
of Schr\"o-dinger operators with ${\mathrm{BMO}}$ traces, {\it J. Funct. Anal.} 266 (2014), 2053-2085.

\vspace{-0.3cm}
\bibitem{EJPX} M. Ess\'en, S. Janson, L. Peng and J. Xiao, $Q$ spaces of several real variables, {\it Indiana Univ. Math. J.} 49 (2000), 575-615.

\vspace{-0.3cm}
\bibitem{FJN2} E. B. Fabes, R. L. Johnson and U. Neri, Green's formula and a characterization of the harmonic functions with ${\mathrm{BMO}}$ traces,
 {\it Ann. Univ. Ferrara- Sez. VII - Sc. Mat.} 21 (1975), 147-157.

\vspace{-0.3cm}
\bibitem{FJN} E. B. Fabes, R. L. Johnson and U. Neri, Spaces of
harmonic functions representable by Poisson integrals of functions
 in ${{\mathrm{BMO}}}$ and $\mathcal{L}_{p,\lambda}$, {\it Indiana Univ. Math. J.} 25 (1976), 159-170.

\vspace{-0.3cm}
\bibitem{FN} E. B. Fabes and U. Neri, Characteriation of temperatures with
initial data in ${\mathrm{BMO}}$, {\it Duke Math. J.} 42 (1975), 725-734.

\vspace{-0.3cm}
\bibitem{FS} C. Fefferman and E. Stein, $H^p$ spaces of several variables, {\it Acta Math.} 129 (1972), 137-193.

\vspace{-0.3cm}
\bibitem{FJW} M. Frazier, B. Jawerth and G. Weiss, Littlewood-Paley Theory and the Study of Function Spaces,
CBMS Regional Conference Series in Mathematics, 79, Published for the Conference Board of the
Mathematical Sciences, Washington, DC; by the American Mathematical Society, Providence, RI, 1991.

\vspace{-0.3cm}
\bibitem{GP} I. Gallagher and M. Paicu, Remarks on the blow-up of solutions to a toy model for the Navier-Stokes equations, \textit{Proc. Amer. Math. Soc.} 137 (2009), 2075-2083.

\vspace{-0.3cm}
\bibitem{GM} Y. Giga and T. Miyakawa, Solutions in $L^r$ of the Navier-Stokes initial value problem, \textit{Arch. Ration. Mech. Anal.} 89 (1985), 267-281.


\vspace{-0.3cm}
\bibitem{H} Y. Hishikawa, Function spaces of parabolic Bloch type, {\it Hiroshima Math. J.} 41 (2011), 55-87.

\vspace{-0.3cm}
\bibitem{HMM} S. Hofmann, S. Mayboroda and M. Mourgoglou, $L^p$ and endpoint solvability results for divergence form elliptic
equations with complex $L^\infty$ coefficients, Preprint.

\vspace{-0.3cm}
\bibitem{JS} H. Jia and V. \v Sver\'ak, Local-in-space estimates near initial time for weak solutions of the Navier-Stokes equations and forward self-similar solutions, {\it Invent. Math.} (2013), DOI 10.1007/s00222-013-0468-x.

\vspace{-0.3cm}
\bibitem{JN} F. John and L. Nirenberg, On functions of bounded mean oscillation, {\it Comm. Pure Appl. Math.} 14 (1961), 415-426.

\vspace{-0.3cm}
\bibitem{Kato} T. Kato, {Strong $L^p$-solutions of the Navier-Stokes equation in $\mathbb R^m$, with applications to weak solutions}, \textit{Math. Z.} {187} (1984), 471-480.

\vspace{-0.3cm}
\bibitem{Kato1} T. Kato, {Strong solutions of the Navier-Stokes equations in Morrey spaces}, \textit{Bol. Soc. Brasil. Math.} {22} (1992), 127-155.

\vspace{-0.3cm}
\bibitem{KT} H. Koch and D. Tataru, {Well-posedness for the Navier-Stokes equations}, \textit{Adv. Math.} {157} (2001), 22-35.

\vspace{-0.3cm}
\bibitem{KXZZ} P. Koskela, J. Xiao, Y. Zhang and Y. Zhou, A quasiconformal composition problem for the $Q$-spaces, {\it Preprint}, 2013, 32 pages,

\vspace{-0.3cm}
\bibitem{LL} Z. Lei and F. Lin, Global mild solutions of Navier-Stokes equations, \textit{Comm. Pure Appl. Math.} 64 (2011), 1297-1304.

\vspace{-0.3cm}
\bibitem{LLo} E. H. Lieb and M. Loss, Analysis,
Second edition, Graduate Studies in Mathematics, 14, American Mathematical Society,
Providence, RI, 2001.

\vspace{-0.3cm}
\bibitem{Mey} G. N. Meyers, Mean oscillation over cubes and H\"older continuity,
{\it Proc. Amer. Math. Soc.} 15 (1964), 717-721.

\vspace{-0.3cm}
\bibitem{M-S} S. Montgomery-Smith, Finite time blow up for a Navier-Stokes like equation, \textit{Proc. Amer. Math. Soc.} 129 (2001), 3025-3029.

\vspace{-0.3cm}
\bibitem{Mor} C. B. Morrey, {On the solutions of quasi-linear elliptic
partial differential equations}, {\it Trans. Amer. Math. Soc.} 43 (1938), 126-166.

\vspace{-0.3cm}
\bibitem{N} U. Neri, Fractional integration on the space $H^1$ and its dual, \textit{Studia Math.} 53 (1975), 175-189.

\vspace{-0.3cm}
\bibitem{NY} M. Nishio and M. Yamada, {Carleson type measures on parabolic Bergman spaces}, \textit{J.
Math. Soc. Japan} {58} (2006), 83-96.

\vspace{-0.3cm}
\bibitem{OT} J. Ortiz and A. Torchinsky, On a mean value inequality, {\it Indiana Univ. Math. J.} 26 (1977), 555-566.

\vspace{-0.3cm}
\bibitem{RY} W. Ramey and H. Yi, Harmonic Bergman functions on half-spaces, {\it Trans. Amer. Math. Soc.} 348 (1996), 633-660.

\vspace{-0.3cm}
\bibitem{SXY} L. Song, J. Xiao and X. Yan, Preduals of quadratic Campanato
spaces associated to operators with heat kernel bounds, {\it Preprint}, 2013, 18 pages.

\vspace{-0.3cm}
\bibitem{Sta} G. Stampacchia, $\mathcal{L}^{(p,\lambda)}$-spaces and interpolation, {\it Comm. Pure Appl. Math.} 17 (1964), 293-306.

\vspace{-0.3cm}
\bibitem{Str1} R. S. Strichartz, Bounded mean oscillation and Sobolev spaces,
{\it Indiana Univ. Math. J.} 29 (1980), 539-558.

\vspace{-0.3cm}
\bibitem{Str2} R. S. Strichartz, Traces of BMO-Sobolev spaces, {\it Proc. Amer. Math. Soc.} 83 (1981), 509-513.

\vspace{-0.3cm}
\bibitem{Tay} M. E. Taylor, {Analysis on Morrey spaces and applications to Navier-Stokes and other evolution equations}, \textit{Comm. Partial Differential Equations} {17} (1992), 1407-1456.

\vspace{-0.3cm}
\bibitem{Tor} A. Torchinsky, On a mean value inequality, {\it Bull. Amer. Math. Soc.} 81 (1975), 950-953.

\vspace{-0.3cm}
\bibitem{Tri} H. Triebel, Local Function Spaces, Heat and Navier-Stokes Equations, EMS Tracts in Mathematics,
20, European Mathematical Society (EMS), Z\"urich, 2013.

\vspace{-0.3cm}
\bibitem{WX} Z. Wu and C. Xie, $Q$ spaces and Morrey spaces,
{\it J. Funct. Anal.} 201 (2003), 282-297.

\vspace{-0.3cm}
\bibitem{Xiao1} J. Xiao, Homothetic variant of fractional Sobolev space
with application to Navier-Stokes system, \emph{Dyn. Partial Differ. Equ.} 4 (2007), 227-245.

\vspace{-0.3cm}
\bibitem{Xiao2} J. Xiao, {Homothetic variant of fractional Sobolev space with application to Navier-Stokes system revisited}, \emph{Dyn. Partial Differ. Equ.} (2014, in press), 16 pages.

\vspace{-0.3cm}
\bibitem{Xu} M. Xu, The duality between Morrey spaces and its predual spaces characterized by heat kernel, {\it Anal. Theory Appl.} 25 (2009), 71-78.

\vspace{-0.3cm}
\bibitem{Y} T. Yoneda, Ill-posedness of the 3D-Navier-Stokes equations in
a generalized Besov space near ${\mathrm{BMO}}^{-1}$, \textit{J. Funct. Anal.} 258 (2010), 3376-3387.
\end{thebibliography}
\end{document}